\documentclass[reqno]{amsart}

\usepackage{a4wide}
\usepackage{color}
\usepackage{mathrsfs}
\usepackage{mathtools}
\usepackage{tikz-cd}
\usepackage{amsmath}
\usepackage{amssymb}
\usepackage{bbm}
\usepackage{esint}
\numberwithin{equation}{section}
\usepackage[colorlinks,citecolor=green,linkcolor=red]{hyperref}
\usepackage{hyphenat}
\usepackage{soul}

\usepackage[latin1]{inputenc}

\newcommand{\N}{\mathbb{N}}
\newcommand{\R}{\mathbb{R}}
\newcommand{\sfd}{{\sf d}}
\renewcommand{\d}{{\mathrm d}}

\newcommand{\restr}[1]{\lower3pt\hbox{\(|_{#1}\)}}

\newcommand{\nchi}{{\raise.3ex\hbox{\(\chi\)}}}

\newcommand{\fr}{\penalty-20\null\hfill\(\blacksquare\)}
\newcommand{\X}{{\rm X}}
\newcommand{\Y}{{\rm Y}}

\newcommand{\mm}{\mathfrak m}

\newcommand{\ppi}{\boldsymbol{\pi}}

\newtheorem{theorem}{Theorem}[section]
\newtheorem{corollary}[theorem]{Corollary}
\newtheorem{lemma}[theorem]{Lemma}
\newtheorem{proposition}[theorem]{Proposition}
\newtheorem{definition}[theorem]{Definition}

\newtheorem{remark}[theorem]{Remark}

\title[Convergence of metric measure spaces and the Urysohn universal space]{Convergence of metric measure spaces via \\ embeddings in the Urysohn universal space}

\author{Milica Cakovi\'{c}}
\address{Department of Mathematics and Statistics,
P.O.\ Box 35, FI-40014 University of Jyvaskyla; Department of Mathematics and Informatics, Faculty of Sciences, University of Novi Sad, Trg Dositeja Obradovi\'ca 3, 21000 Novi Sad, Serbia}
\email{milica.m.cakovic@jyu.fi}

\author{Enrico Pasqualetto}
\address{Department of Mathematics and Statistics,
P.O.\ Box 35, FI-40014 University of Jyvaskyla}
\email{enrico.e.pasqualetto@jyu.fi}

\author{Timo Schultz}
\address{Department of Mathematics and Statistics,
P.O.\ Box 35, FI-40014 University of Jyvaskyla}
\email{timo.m.schultz@jyu.fi}

\begin{document}
\date{\today}
\keywords{Metric measure space; Urysohn universal metric space; box distance; Wasserstein distance;
pointed measured Gromov convergence; Gromov's compactness theorem.}
\subjclass[2020]{54E35, 30L05, 49Q22, 53C23}
\begin{abstract}
We study different notions of convergence of metric measure spaces by means of isometric embeddings into the Urysohn
universal metric space \(\mathbb U\). Due to the universality of \(\mathbb U\), the collection \(\mathbb X_1\) of isomorphism
classes of normalised metric measure spaces can be canonically identified with the quotient (set)
\(\mathscr P_\sim(\mathbb U)=\mathscr P(\mathbb U)/\sim\) of the space \(\mathscr P(\mathbb U)\) of Borel probability
measures on \(\mathbb U\), where \(\mu\sim\nu\) if \(\nu\) is the pushforward of \(\mu\) under an isometry between their
respective supports. By making crucial use of the ultrahomogeneity of \(\mathbb U\), we show that, under the above identification,
Gromov's box topology on \(\mathbb X_1\) coincides with the quotient topology induced by the weak topology of \(\mathscr P(\mathbb U)\).
More quantitatively, the truncated \(1\)-Wasserstein distance on \(\mathscr P(\mathbb U)\) induces a complete and separable
distance \(\sfd_{\rm mG}\) on \(\mathbb X_1\cong\mathscr P_\sim(\mathbb U)\), which metrises the quotient topology of
\(\mathscr P_\sim(\mathbb U)\) and is H\"{o}lder equivalent to the box distance \(\square\).
\end{abstract}
\maketitle
\section{Introduction}
\subsection{General overview}
A variety of notions of convergence for metric measure spaces have been introduced and studied in the literature,
beginning with Gromov's pioneering work \cite{Gromov07}, where -- among other notions -- the \emph{box topology}
was introduced. One of the main motivations for introducing such notions came from the study of limits of smooth
manifolds, which may develop singularities, collapse, or exhibit concentration phenomena. Gromov's theory of
convergence and concentration of metric measure spaces was developed further by Shioya \cite{Shioya16}.
Another important notion of convergence is the \emph{measured-Gromov--Hausdorff convergence} introduced by Fukaya \cite{Fuk87}.
Two further notions that will be considered in this paper are Sturm's \emph{\({\bf D}\)-distance} \cite{Sturm2006a}
and the \emph{pointed-measured-Gromov convergence} introduced by Gigli, Mondino and Savar\'{e} in \cite{GigliMondinoSavare15}.
\medskip

Most notions of convergence of metric measure spaces admit an equivalent
extrinsic characterisation, in which convergence is described by simultaneously embedding the spaces into a common ambient metric space.
The aim of this paper is to investigate the implications of choosing
the \emph{Urysohn universal metric space} \((\mathbb U,\sfd_{\mathbb U})\) as such an ambient space. Introduced by Urysohn
in a paper published posthumously \cite{Urysohn27}, the space \(\mathbb U\) is the unique -- up to isometry
-- complete and separable metric space that is \emph{universal} (i.e., it contains an isometric copy of every
separable metric space) and \emph{ultrahomogeneous} (i.e., every isometry between finite subsets of \(\mathbb U\)
extends to a global isometry of \(\mathbb U\)); see Section \ref{s:Urysohn_univ_space} for further details.
\subsection{Statement of results}
Letting \(\mathbb X_1\) be the space of all isomoprhism classes of \emph{normalised} metric measure spaces (see Section \ref{s:mms}),
we consider a notion of convergence for sequences in \(\mathbb X_1\) that
we refer to as the \emph{measured-Gromov convergence} (or \emph{mG-convergence}). Akin to the extrinsic definition
of pmG-convergence, we say that a sequence \(([\X_n,\sfd_n,\mm_n])_n\subseteq\mathbb X_1\)
mG-converges to \([\X_\infty,\sfd_\infty,\mm_\infty]\in\mathbb X_1\) if \((\iota_n)_\#\mm_n\) weakly converges
to \((\iota_\infty)_\#\mm_\infty\) for some choice of isometric embeddings
\(\iota_n\colon\X_n\to\Y\) into some ambient metric space \(\Y\); see Definition \ref{def:mG}.
Due to the universality of \(\mathbb U\), the space \(\mathbb X_1\) can be identified with the quotient
\(\mathscr P_\sim(\mathbb U)\coloneqq\mathscr P(\mathbb U)/\sim\), where the equivalence relation \(\sim\)
on the set of probability measures \(\mathscr P(\mathbb U)\) is defined as follows:
\begin{equation}\label{eq:def_equiv_rel_P(U)}
\mu\sim\nu\qquad\Longleftrightarrow\qquad\phi_\#\mu=\nu\;\text{ for some isometric bijection }
\phi\colon{\rm spt}(\mu)\to{\rm spt}(\nu).
\end{equation}
We denote by \(\Phi\colon\mathbb X_1\to\mathscr P_\sim(\mathbb U)\) the canonical bijection
(see Definition \ref{def:Phi}). Our main results are:
\begin{itemize}
\item[1)] Letting \(W\) denote the \emph{truncated \(1\)-Wasserstein distance} on \(\mathscr P(\mathbb U)\)
-- see \eqref{eq:trunc_1-Wasserstein} for its definition -- we consider the Hausdorff metric \(W_H\) induced
by \(W\) on the quotient \(\mathscr P_\sim(\mathbb U)\):
\begin{equation}\label{eq:def_W_H}
W_H(\theta,\eta)\coloneqq\max\bigg\{\sup_{\mu\in\theta}\inf_{\nu\in\eta}W(\mu,\nu),\sup_{\nu\in\eta}\inf_{\mu\in\theta}W(\mu,\nu)\bigg\}
\quad\text{ for every }\theta,\eta\in\mathscr P_\sim(\mathbb U).
\end{equation}
We prove that \((\mathscr P_\sim(\mathbb U),W_H)\) is a complete separable metric space and that \(W_H\) generates
the quotient topology (induced by the weak topology of \(\mathscr P(\mathbb U)\), which is metrised by \(W\)).
\item[2)] Via the bijection \(\Phi\), the distance \(W_H\) induces a distance \(\sfd_{\rm mG}\) on \(\mathbb X_1\)
(see \eqref{eq:def_dmG}). We prove that \(\sfd_{\rm mG}\) induces precisely the notion of
mG-convergence of sequences in \(\mathbb X_1\).
\item[3)] Denoting by \(\square\) the box distance on \(\mathbb X_1\) (see Section \ref{s:box_dist}),
we prove that \(\sfd_{\rm mG}\) and \(\square\) are H\"{o}lder equivalent. In particular, the space \((\mathbb X_1,\square)\)
is homeomorphic to the topological quotient \(\mathscr P_\sim(\mathbb U)\) of \((\mathscr P(\mathbb U),W)\).
In other words, mG-convergence provides an extrinsic formulation of convergence with respect to the box topology.
\end{itemize}
The results stated in 1), 2) and 3) will be established in Theorem \ref{thm:equiv_W_H}, Theorem
\ref{thm:equiv_mG-conv} and Theorem \ref{thm:box_equiv_mG}, respectively. In the course of proving these results,
we also provide further characterisations of the distance \(\sfd_{\rm mG}\). For example, in Proposition \ref{prop:D=d_mG}
we show that \(\sfd_{\rm mG}\) coincides with a truncated version of the \emph{\(L_1\)-transportation distance} introduced by Sturm
in \cite{Sturm2006a}. Moreover, taking advantage of the special features of \(\mathbb U\), in Theorem
\ref{thm:precpt_W_H} we provide a characterisation of all precompact subsets of \((\mathscr P_\sim(\mathbb U),W_H)\)
with a relatively simple proof. As a consequence, we immediately recover Gromov's \emph{compactness criterion}
\cite[Lemma 4.28]{Shioya16} for the box topology (see Corollary \ref{cor:compact_box_top}). Finally,
embedding into the Urysohn universal space has interesting consequences also for other notions of convergence.
For example, in Theorem \ref{thm:embed_for_pmG} we prove -- by means of the main results
of this paper -- that the space \(\mathbb X_{pt}\) of equivalence classes of \emph{pointed} metric measure spaces
endowed with the pmG-topology can be realised as a subspace of the topological product
\(\mathbb X_1^{\mathbb Z}\times\R^{\mathbb Z}\), where \(\mathbb X_1\) is equipped with the box topology.
\subsection*{Acknowledgements}
M.\ Cakovi\'{c} was supported by the Research Council of Finland grant 355122.
E.\ Pasqualetto was supported by the Research Council of Finland grant 362898.
\section{Preliminaries}
\subsection{Quotients of a metric space}\label{s:quotients_metric_space}
Let \((\X,\sfd)\) be a metric space and \(\mathcal R\) an equivalence relation on \(\X\). Given any two
elements \(a\) and \(b\) of the quotient space \(\X/\mathcal R\), we define
\[\begin{split}
\sfd_H(a,b)&\coloneqq\max\bigg\{\sup_{x\in a}\inf_{y\in b}\sfd(x,y),\sup_{y\in b}\inf_{x\in a}\sfd(x,y)\bigg\},\\
{\rm dist}_\sfd(a,b)&\coloneqq\inf_{(x,y)\in a\times b}\sfd(x,y),\\
\bar\sfd(a,b)&\coloneqq\inf_{(x_i)_i,(y_i)_i}\sum_{i=1}^n\sfd(x_i,y_i),
\end{split}\]
where the infimum in the definition of \(\bar\sfd(a,b)\) is taken over all \(n\in\N\) and \((x_i)_{i=1}^n,(y_i)_{i=1}^n\subseteq\X\)
such that \((x_1,y_n)\in a\times b\) and \(y_i\mathcal R x_{i+1}\) for every \(i=1,\ldots,n-1\). Whereas the \emph{Hausdorff metric}
\(\sfd_H\) is an extended pseudodistance and the \emph{quotient metric} \(\bar\sfd\) is a pseudodistance, we have that \({\rm dist}_\sfd\)
is not -- in general -- a pseudodistance (as easy examples illustrate). We observe that
\begin{equation}\label{eq:ineq_quot_pseudodist}
\bar\sfd(a,b)\leq{\rm dist}_\sfd(a,b)\leq\sfd_H(a,b)\quad\text{ for every }a,b\in\X/\mathcal R.
\end{equation}
To prove the first inequality, consider only those competitors for \(\bar\sfd(a,b)\) with \(n=1\).
To prove the second inequality, note that \({\rm dist}_\sfd(a,b)=\inf_{x\in a}\inf_{y\in b}\sfd(x,y)
\leq\sup_{x\in a}\inf_{y\in b}\sfd(x,y)\leq\sfd_H(a,b)\).
\begin{lemma}\label{lem:various_top_on_quot}
Let \((\X,\sfd)\) be a metric space. Then the quotient topology \(\tau_{\X/\mathcal R}\) on \(\X/\mathcal R\) satisfies
\[
\tau[\bar\sfd]\subseteq\tau_{\X/\mathcal R}\subseteq\tau[\sfd_H],
\]
where \(\tau[\bar\sfd]\) and \(\tau[\sfd_H]\) denote the topologies induced by \(\bar\sfd\) and \(\sfd_H\), respectively.
\end{lemma}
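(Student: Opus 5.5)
Write \(\pi\colon\X\to\X/\mathcal R\) for the quotient map. By definition, \(U\subseteq\X/\mathcal R\) is in \(\tau_{\X/\mathcal R}\) if and only if \(\pi^{-1}(U)\) is open in \(\X\). Since \(\bar\sfd\) and \(\sfd_H\) are (extended) pseudodistances, their topologies are generated by open balls, so a set is open for them if and only if it contains a ball around each of its points. The two inclusions are then handled separately, using only the definitions and the inequalities \eqref{eq:ineq_quot_pseudodist}.

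\emph{First inclusion.} Let \(U\in\tau[\bar\sfd]\) and fix \(x\in\pi^{-1}(U)\), so \(\pi(x)\in U\). Pick \(r>0\) with \(B_{\bar\sfd}(\pi(x),r)\subseteq U\). If \(y\in\X\) satisfies \(\sfd(x,y)<r\), then by \eqref{eq:ineq_quot_pseudodist}
\[
\bar\sfd(\pi(x),\pi(y))\leq{\rm dist}_\sfd(\pi(x),\pi(y))\leq\sfd(x,y)<r .
\]
Hence \(\pi(y)\in U\), that is, \(B_\sfd(x,r)\subseteq\pi^{-1}(U)\). So \(\pi^{-1}(U)\) is open and \(U\in\tau_{\X/\mathcal R}\). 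Equivalently, \(\pi\) is \(1\)-Lipschitz into \((\X/\mathcal R,\bar\sfd)\).

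\emph{Second inclusion.} Let \(U\in\tau_{\X/\mathcal R}\), so \(V\coloneqq\pi^{-1}(U)\) is open and saturated. Fix \(a\in U\). The key point is to choose a single representative \(x\in a\) (so \(x\in V\)) and a radius \(r>0\) with \(B_\sfd(x,r)\subseteq V\). Now let \(b\in\X/\mathcal R\) with \(\sfd_H(a,b)<r\). Since \(\sup_{x'\in a}\inf_{y\in b}\sfd(x',y)\leq\sfd_H(a,b)<r\), there is some \(y\in b\) with \(\sfd(x,y)<r\). Then \(y\in V\), so \(b=\pi(y)\in U\). Thus \(B_{\sfd_H}(a,r)\subseteq U\), which gives \(U\in\tau[\sfd_H]\).

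I expect no real obstacle. The only subtlety is the second inclusion: openness of \(V\) gives a radius depending on the chosen point \(x\), not one that is uniform over the class \(a\). This causes no trouble, because the Hausdorff condition provides, for every point of \(a\) (in particular for the fixed \(x\)), a point of \(b\) within distance \(r\).
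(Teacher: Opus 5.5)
Your proof is correct and follows the paper's argument. The first inclusion is exactly the \(1\)-Lipschitz property of the quotient map for \(\bar\sfd\) coming from \eqref{eq:ineq_quot_pseudodist}. The second uses the same key fact as the paper: for a fixed representative \(x\in a\), the class \(b\) contains points within \(\sfd_H(a,b)+\delta\) of \(x\). The only difference is that the paper phrases it with sequences and \(\tau_{\X/\mathcal R}\)-closed sets, while you use balls and open saturated sets directly, which also avoids the (easy) remark that sequential closedness suffices in a pseudometric topology.
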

\begin{proof}
If \((x_n)_n\subseteq\X\) and \(x\in\X\) satisfy \(\sfd(x_n,x)\to 0\), then \(\bar\sfd([x_n]_{\mathcal R},[x]_{\mathcal R})\leq\sfd(x_n,x)\to 0\)
by \eqref{eq:ineq_quot_pseudodist}. Hence, the quotient map \((\X,\tau[\sfd])\ni x\mapsto[x]_{\mathcal R}\in(\X/\mathcal R,\tau[\bar\sfd])\)
is continuous and thus \(\tau[\bar\sfd]\subseteq\tau_{\X/\mathcal R}\).

To prove that \(\tau_{\X/\mathcal R}\subseteq\tau[\sfd_H]\), it suffices to show that any given \(\tau_{\X/\mathcal R}\)-closed
set \(C\subseteq\X/\mathcal R\) is \(\tau[\sfd_H]\)-closed. To this aim, fix a sequence \((a_n)_n\subseteq C\) and assume that
\(\sfd_H(a_n,a)\to 0\) for some \(a\in\X/\mathcal R\). Take any \(x\in a\) and pick \(x_n\in a_n\) for every \(n\in\N\) so that
\(\sfd(x_n,x)\leq\sfd_H(a_n,a)+1/n\), so that \(\sfd(x_n,x)\to 0\). Since \((x_n)_n\) is contained in the \(\sfd\)-closed set
\(\mathcal C\coloneqq\{y\in\X:[y]_{\mathcal R}\in C\}\), we deduce that \(x\in\mathcal C\) and thus \(a=[x]_{\mathcal R}\in C\).
This proves that \(C\) is \(\tau[\sfd_H]\)-closed, as desired.
\end{proof}
\begin{proposition}\label{prop:act_by_isom_conseq}
Let \((\X,\sfd)\) be a metric space and \(\mathcal R\) an equivalence relation on \(\X\). Let \(G\) be a group
acting on \(\X\) by isometries such that the \(\mathcal R\)-equivalence classes coincide with the closures of
the \(G\)-orbits. Then it holds that \(\bar\sfd={\rm dist}_\sfd=\sfd_H\). In particular, \(\sfd_H\) is a distance
on \(\X/\mathcal R\) that induces the quotient topology \(\tau_{\X/\mathcal R}\). If in addition \((\X,\sfd)\)
is complete, then \((\X/\mathcal R,\sfd_H)\) is complete.
\end{proposition}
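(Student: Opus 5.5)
Given \eqref{eq:ineq_quot_pseudodist}, it suffices to prove \(\sfd_H(a,b)\leq\bar\sfd(a,b)\) for all \(a,b\in\X/\mathcal R\); then all three quantities coincide. First I record two consequences of the hypothesis. For \(g\in G\) and \(x\in\X\), the point \(gx\) lies in the \(G\)-orbit of \(x\), hence in its closure, so \(gx\,\mathcal R\,x\); thus each class is \(G\)-invariant. Moreover, every class \(a\) is the closure of \(Gx\) for each \(x\in a\), so \(Gx\) is dense in \(a\).

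\textbf{Step 1: two-point comparison.} For \(x\in a\) and \(y\in b\), I claim that \(\sup_{x'\in a}\inf_{y'\in b}\sfd(x',y')\leq\sfd(x,y)\). Fix \(x'\in a\) and \(\varepsilon>0\). By density of \(Gx\) in \(a\), pick \(g\in G\) with \(\sfd(gx,x')<\varepsilon\). Since \(gy\in b\) and \(g\) is an isometry,
\[
\inf_{y'\in b}\sfd(x',y')\leq\sfd(x',gy)\leq\varepsilon+\sfd(gx,gy)=\varepsilon+\sfd(x,y).
\]
By symmetry the same bound holds with the roles of \(a\) and \(b\) exchanged. Hence \(\sfd_H(a,b)\leq\sfd(x,y)\), and taking the infimum gives \(\sfd_H\leq{\rm dist}_\sfd\).

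\textbf{Step 2: chains.} Take a competitor \((x_i)_{i=1}^n,(y_i)_{i=1}^n\) for \(\bar\sfd(a,b)\). Set \(c_i\coloneqq[x_i]_{\mathcal R}\), \(c_{n+1}\coloneqq b\), and note \(y_i\in c_{i+1}\). By Step 1, \(\sfd_H(c_i,c_{i+1})\leq\sfd(x_i,y_i)\). The Hausdorff extended pseudodistance satisfies the triangle inequality, so
\[
\sfd_H(a,b)\leq\sum_{i=1}^n\sfd(x_i,y_i).
\]
Taking the infimum yields \(\sfd_H\leq\bar\sfd\).

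\textbf{Step 3: the metric and topological consequences.} \(\sfd_H\) is finite because \(\sfd_H=\bar\sfd<\infty\). To see it is a distance, suppose \(\sfd_H(a,b)=0\) and \(x\in a\). Then \(x\) lies in the closure of \(b\). Since \(b\) is an orbit closure, it is closed, so \(x\in b\) and hence \(a=b\). For the topology, Lemma \ref{lem:various_top_on_quot} gives \(\tau[\bar\sfd]\subseteq\tau_{\X/\mathcal R}\subseteq\tau[\sfd_H]\), and the two outer topologies now coincide, so all three are equal.

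\textbf{Step 4: completeness.} Let \((a_k)_k\) be \(\sfd_H\)-Cauchy. Passing to a subsequence, assume \(\sfd_H(a_k,a_{k+1})<2^{-k}\). Pick \(x_1\in a_1\), and inductively choose \(x_{k+1}\in a_{k+1}\) with \(\sfd(x_k,x_{k+1})<2^{-k}\); this is possible by the definition of \(\sfd_H\). Then \((x_k)_k\) is Cauchy in \(\X\), so it converges to some \(x\in\X\). Let \(a\coloneqq[x]_{\mathcal R}\). By Step 1, \(\sfd_H(a_k,a)\leq\sfd(x_k,x)\to0\). A Cauchy sequence with a convergent subsequence converges, so \((a_k)_k\) converges to \(a\).

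The main obstacle is Step 1. It is exactly where the group action enters: isometries transport the pair \((x,y)\) near any chosen \(x'\in a\), and the closure hypothesis is needed to approximate \(x'\) by points of the orbit \(Gx\). The remaining steps are routine.
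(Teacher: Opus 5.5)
Your proposal is correct and follows essentially the same route as the paper. You use the two-point bound \(\sfd_H(a,b)\leq\sfd(x,y)\) obtained from orbit density and the isometric action, then the chain estimate via the triangle inequality for \(\sfd_H\), Lemma \ref{lem:various_top_on_quot} for the topology, and inductive lifting of representatives for completeness; the only cosmetic difference is that you pass to a subsequence with \(\sfd_H(a_k,a_{k+1})<2^{-k}\), whereas the paper works directly with sequences of summable increments.
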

\begin{proof}
Fix \(a,b\in\X/\mathcal R\), \(x_0\in a\) and \(y_0\in b\). Given any \(x\in a=\overline{G.x_0}\) and
\(\varepsilon>0\), we can find \(g\in G\) such that \(\sfd(x,g.x_0)<\varepsilon\). Hence, we can estimate
\[
\sfd(x,g.y_0)\leq\sfd(x,g.x_0)+\sfd(g.x_0,g.y_0)<\sfd(x_0,y_0)+\varepsilon,
\]
which yields \(\sfd(x,G.y_0)\leq\sfd(x_0,y_0)\). Similarly, \(\sfd(G.x_0,y)\leq\sfd(x_0,y_0)\) for every \(y\in b\).
This shows that \(\sfd_H(a,b)\leq\sfd(x_0,y_0)\), so that \(\sfd_H(a,b)\leq{\rm dist}_\sfd(a,b)\) by the
arbitrariness of \(x_0\in a\) and \(y_0\in b\). Therefore, for any choice of competitors \((x_i)_{i=1}^n\), \((y_i)_{i=1}^n\)
for \(\bar\sfd(a,b)\), we can estimate
\[
\sfd_H(a,b)=\sfd_H([x_1]_{\mathcal R},[y_n]_{\mathcal R})\leq\sum_{i=1}^n\sfd_H([x_i]_{\mathcal R},[y_i]_{\mathcal R})
\leq\sum_{i=1}^n{\rm dist}_\sfd([x_i]_{\mathcal R},[y_i]_{\mathcal R})\leq\sum_{i=1}^n\sfd(x_i,y_i),
\]
whence the inequality \(\sfd_H(a,b)\leq\bar\sfd(a,b)\) follows. Recalling \eqref{eq:ineq_quot_pseudodist}, we conclude that
\(\bar\sfd={\rm dist}_\sfd=\sfd_H\), and thus \(\sfd_H\) is a pseudodistance that induces the topology \(\tau_{\X/\mathcal R}\)
by Lemma \ref{lem:various_top_on_quot}. Since the elements of \(\X/\mathcal R\) are closed subsets of \((\X,\sfd)\), we have
that \(\sfd_H\) is in fact a distance. Finally, assume \((\X,\sfd)\) is complete. Fix a sequence \((a_n)_n\subseteq\X/\mathcal R\)
such that \(\sum_{n=1}^\infty\sfd_H(a_n,a_{n+1})<+\infty\). We can select representatives \(x_n\in a_n\) for every \(n\in\N\)
such that \(\sfd(x_n,x_{n+1})\leq\sfd_H(a_n,a_{n+1})+2^{-n}\). It follows that \(\sum_{n=1}^\infty\sfd(x_n,x_{n+1})<+\infty\),
thus \((x_n)_n\) is a Cauchy sequence in \((\X,\sfd)\). Hence, \(\sfd(x_n,x)\to 0\) for some \(x\in\X\) and thus
\(\sfd_H(a_n,[x]_{\mathcal R})\leq\sfd(x_n,x)\to 0\), proving the completeness of \((\X/\mathcal R,\sfd_H)\).
\end{proof}
\subsection{Metric measure spaces}\label{s:mms}
In this paper, by a \emph{metric measure space} \((\X,\sfd,\mm)\) we mean a complete separable metric space
\((\X,\sfd)\) together with a boundedly-finite Borel measure \(\mm\) on \(\X\) (i.e., \(\mm(B)<+\infty\)
for every bounded Borel set \(B\subseteq\X\)). When \(\mm\in\mathscr P(\X)\), i.e.\ \(\mm\) is a probability
measure, we say that \((\X,\sfd,\mm)\) is a \emph{normalised} metric measure space. Two metric measure spaces
\((\X,\sfd_\X,\mm_\X)\) and \((\Y,\sfd_\Y,\mm_\Y)\) are \emph{isomorphic as metric measure spaces} if there
exists an isometric bijection \(\phi\colon{\rm spt}(\mm_\X)\to{\rm spt}(\mm_\Y)\) such that \(\phi_\#\mm_\X=\mm_\Y\),
where \({\rm spt}(\cdot)\) denotes the \emph{support} of a measure, while \(\phi_\#\mm_\X\) is the
\emph{pushforward measure} of \(\mm_\X\) with respect to \(\phi\). Letting \(\mathcal{X}=[\X,\sfd,\mm]\)
be the isomorphism class of a metric measure space \((\X,\sfd,\mm)\), we define
\[
\mathbb X_1\coloneqq\{[\X,\sfd,\mm]\;|\;(\X,\sfd,\mm)\text{ is a normalised metric measure space}\}.
\]
\begin{remark}\label{rmk:suff_cond_isom_surj}{\rm
If \((\X,\sfd_\X,\mm_\X)\), \((\Y,\sfd_\Y,\mm_\Y)\) are metric measure spaces and \(\phi\colon{\rm spt}(\mm_\X)\to\Y\)
is an isometry with \(\mm_\Y=\phi_\#\mm_\X\), then \(\phi\) is an isomorphism of metric measure spaces. Indeed,
\(\mm_\Y\) is concentrated on \(\phi({\rm spt}(\mm_\X))\), which is closed since \(\phi\) is an isometry,
thus \({\rm spt}(\mm_\Y)\subseteq\phi({\rm spt}(\mm_\X))\). Conversely, if \(y=\phi(x)\in\phi({\rm spt}(\mm_\X))\), then \(\mm_\Y(B_r(y))=\mm_\X(B_r(x))>0\) for every \(r>0\) since \(B_r(x)\cap {\rm spt}(\mm_\X)=\phi^{-1}(B_r(y))\),
thus \(y\in{\rm spt}(\mm_\Y)\). Hence, we have \(\phi({\rm spt}(\mm_\X))={\rm spt}(\mm_\Y)\).
\fr}\end{remark}
\subsection{Gromov's box distance}\label{s:box_dist}
Let us recall Gromov's notion of \emph{box distance} \cite{Gromov07} for normalised spaces;
see also \cite[Chapter 4]{Shioya16}. Given a normalised metric measure space \((\X,\sfd,\mm)\),
a \emph{parameter} of \(\mm\) is a Borel map \(q\colon[0,1)\to\X\) such that \(q_\#\mathscr L_1=\mm\),
where \(\mathscr L_1\) denotes the restriction of the Lebesgue measure to the interval \([0,1)\).
Every normalised metric measure space admits a parameter; see \cite[Lemma 4.2]{Shioya16}. Given two pseudodistances
\(\delta_1\) and \(\delta_2\) on \([0,1)\), we define \(\square(\delta_1,\delta_2)\) as the infimum of
\(\varepsilon\in[0,1]\) for which there is a Borel set \(I_\varepsilon\subseteq[0,1)\) with
\(\mathscr L_1(I_\varepsilon)\geq 1-\varepsilon\) such that
\[
|\delta_1(t,s)-\delta_2(t,s)|\leq\varepsilon\quad\text{ for every }t,s\in I_\varepsilon.
\]
A map \(f\colon\Y\to\X\) between a set \(\Y\) and a metric space \((\X,\sfd)\) induces
a pseudodistance \(f^*\sfd\) on \(\Y\):
\begin{equation}\label{eq:def_pullback_pseudodist}
(f^*\sfd)(x,y)\coloneqq\sfd(f(x),f(y))\quad\text{ for every }x,y\in\Y.
\end{equation}
\begin{definition}[Box distance]
The \emph{box distance} \(\square\colon\mathbb X_1\times\mathbb X_1\to[0,1]\) is defined as
\[
\square([\X,\sfd_\X,\mm_\X],[\Y,\sfd_\Y,\mm_\Y])\coloneqq\inf_{q_\X,q_\Y}\square(q_\X^*\sfd_\X,q_\Y^*\sfd_\Y)
\quad\text{ for every }[\X,\sfd_\X,\mm_\X],[\Y,\sfd_\Y,\mm_\Y]\in\mathbb X_1,
\]
where the infimum is taken over all parameters \(q_\X\) and \(q_\Y\) of \(\mm_\X\) and \(\mm_\Y\), respectively.
\end{definition}

Clearly, the above definition is well posed, i.e.\ it is independent of the chosen representatives.
\subsection{Urysohn's universal space}\label{s:Urysohn_univ_space}
We recall that we use \((\mathbb U,\sfd_{\mathbb U})\) to denote the Urysohn universal metric space \cite{Urysohn27},
which is -- up to isometric bijections -- the unique complete and separable metric space that is both \emph{universal}
and \emph{ultrahomogeneous}. Universality means that every separable metric space can be isometrically embedded
in \(\mathbb U\), whereas ultrahomogeneity means that every isometry \(\phi\colon{\rm F}\to\mathbb U\) defined
on a finite subset \({\rm F}\subseteq\mathbb U\) can be extended to an isometric bijection \(\varphi\colon\mathbb U\to\mathbb U\).
See e.g.\ \cite[Section 3.1]{Hei03} or \cite{Melleray07}, and the references therein, for a more detailed
discussion on \(\mathbb U\).
\medskip

We denote by \({\rm Iso}(\mathbb U)\) the \emph{isometry group} of \(\mathbb U\), i.e.\ the space
of all isometric bijections \(\varphi\colon\mathbb U\to\mathbb U\). Note that \({\rm Iso}(\mathbb U)\)
is a remarkably rich group, due to the ultrahomogeneity of \(\mathbb U\). Furthermore:
\begin{theorem}[Huhunai\v{s}vili \cite{Huhunaisvili55}]\label{thm:ext_Iso_cpt}
\(\mathbb U\) is \emph{compactly ultrahomogeneous}, which means that if \(K\subseteq\mathbb U\)
is a compact set and \(\phi\colon K\to\mathbb U\) is an isometry, then there exists \(\varphi\in{\rm Iso}(\mathbb U)\)
such that \(\varphi|_K=\phi\).
\end{theorem}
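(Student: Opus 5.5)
The plan is to derive compact ultrahomogeneity from the finite one-point extension property of \(\mathbb U\) in two stages. First, upgrade it to a one-point extension property over compact sets. Second, run a back-and-forth argument in which every partial isometry has compact domain.

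I will use the characterisation of \(\mathbb U\) by the finite extension property. A function \(f\colon A\to[0,\infty)\) on \(A\subseteq\mathbb U\) is called \emph{Katětov} if \(|f(a)-f(b)|\leq\sfd_{\mathbb U}(a,b)\leq f(a)+f(b)\) for all \(a,b\in A\). For \(A\) finite, every Katětov function is realised by some \(u\in\mathbb U\), meaning \(\sfd_{\mathbb U}(u,a)=f(a)\) for all \(a\in A\). This follows from universality plus ultrahomogeneity: embed \(A\cup\{*\}\) with the prescribed distances into \(\mathbb U\), then move it back onto \(A\) by an isometry of \(\mathbb U\).

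\textbf{Step 1 (compact extension property), the main obstacle.} I claim that every Katětov \(f\) on a compact \(A\subseteq\mathbb U\) is realised. I would fix finite \(2^{-k}\)-nets \(A_1\subseteq A_2\subseteq\dots\) of \(A\) and build points \(u_k\) inductively.
\begin{itemize}
\item Let \(u_1\) realise \(f|_{A_1}\).
\item Given \(u_k\), set \(\delta_k\coloneqq\sup_{a\in A_{k+1}}|f(a)-\sfd_{\mathbb U}(u_k,a)|\).
\item Extend \(f|_{A_{k+1}}\) to \(A_{k+1}\cup\{u_k\}\) by declaring the value \(\delta_k\) at \(u_k\).
\item Let \(u_{k+1}\) realise this extension.
\end{itemize}
The extension is Katětov. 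Indeed, \(\delta_k\geq|f(a)-\sfd_{\mathbb U}(u_k,a)|\) gives both \(f(a)\leq\delta_k+\sfd_{\mathbb U}(u_k,a)\) and \(\sfd_{\mathbb U}(u_k,a)\leq\delta_k+f(a)\). The remaining inequality \(\delta_k\leq f(a)+\sfd_{\mathbb U}(u_k,a)\) holds because \(\delta_k\) is itself one of the quantities \(|f(a')-\sfd_{\mathbb U}(u_k,a')|\) (up to a sup), and each such quantity is at most \(f(a)+\sfd_{\mathbb U}(u_k,a)\) by the 1-Lipschitz properties.

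Next I estimate \(\delta_k\). Take \(a\in A_{k+1}\) and pick \(a'\in A_k\) with \(\sfd_{\mathbb U}(a,a')\leq2^{-k}\). Since \(u_k\) realises \(f\) exactly on \(A_k\), and both \(f\) and \(\sfd_{\mathbb U}(u_k,\cdot)\) are 1-Lipschitz, we get \(|f(a)-\sfd_{\mathbb U}(u_k,a)|\leq 2\cdot2^{-k}\).

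Therefore \(\sfd_{\mathbb U}(u_k,u_{k+1})=\delta_k\leq 2^{1-k}\), and \((u_k)_k\) is Cauchy. Its limit \(u\) realises \(f\) on \(\bigcup_kA_k\), which is dense in \(A\), and hence on all of \(A\) by continuity.

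\textbf{Step 2 (back-and-forth).} Fix a dense sequence \((x_n)_n\) in \(\mathbb U\) and set \(\psi_0\coloneqq\phi\colon K_0\coloneqq K\to\mathbb U\). Given an isometry \(\psi_n\) defined on a compact set \(K_n\) (namely \(K\) plus finitely many points), I perform two moves.
\begin{itemize}
\item \emph{Forth.} The function \(\psi_n(a)\mapsto\sfd_{\mathbb U}(x_n,a)\) is Katětov on the compact set \(\psi_n(K_n)\). Step 1 gives a point \(v\) realising it, and I extend \(\psi_n\) by \(x_n\mapsto v\).
\item \emph{Back.} Symmetrically, I apply Step 1 to the inverse map to put \(x_n\) into the range.
\end{itemize}
Compactness of domains and ranges is preserved, since each move only adds a single point.

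The union \(\psi\) of the \(\psi_n\) is then an isometry between two dense subsets of \(\mathbb U\), and it agrees with \(\phi\) on \(K\). It extends by completeness to an isometry \(\varphi\colon\mathbb U\to\mathbb U\). The same is true of \(\psi^{-1}\), and the two extensions are mutually inverse. Hence \(\varphi\in{\rm Iso}(\mathbb U)\) and \(\varphi|_K=\phi\).
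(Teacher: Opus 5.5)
The paper does not prove this statement. It is quoted from Huhunai\v{s}vili and used as a black box in the proof of Lemma \ref{lem:Iso_acts_by_isom}. Your argument is correct and is essentially the standard proof of that theorem:
\begin{itemize}
\item you derive the finite one-point extension property from universality plus finite ultrahomogeneity;
\item you upgrade it to Kat\v{e}tov functions on compact (indeed, merely totally bounded) sets via a Cauchy sequence of finite-stage realisations;
\item you then run a back-and-forth in which every partial isometry keeps a compact domain and range.
\end{itemize}
Compared with the citation, this makes the paper self-contained, relying only on the two defining properties of \(\mathbb U\) recalled in Section \ref{s:Urysohn_univ_space}.

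Two small points need tidying.
\begin{itemize}
\item In checking \(\delta_k\leq f(a)+\sfd_{\mathbb U}(u_k,a)\), consider the half \(\sfd_{\mathbb U}(u_k,a')-f(a')\leq f(a)+\sfd_{\mathbb U}(u_k,a)\). It does not follow from the Lipschitz bounds alone; it uses the Kat\v{e}tov inequality \(\sfd_{\mathbb U}(a,a')\leq f(a)+f(a')\).
\item If \(u_k\) happens to lie in \(A_{k+1}\), the prescribed value \(\delta_k\) at \(u_k\) may clash with \(f(u_k)\), so the extension is not well defined as written. In that case simply realise \(f|_{A_{k+1}}\) and note that \(\sfd_{\mathbb U}(u_{k+1},u_k)=f(u_k)\leq\delta_k\), which is all the Cauchy estimate needs. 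Similarly, in the finite case, if \(f(a)=0\) for some \(a\in A\), then \(A\cup\{*\}\) is only pseudometric, but then \(a\) itself realises \(f\).
\end{itemize}
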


Given two normalised metric measure spaces \((\X,\sfd_\X,\mu)\) and \((\Y,\sfd_\Y,\nu)\),
we denote by \({\rm Adm}(\mu;\nu)\) the collection of all \emph{transport plans} \(\pi\) between \(\mu\) to \(\nu\),
i.e.\ of those measures \(\pi\in\mathscr P(\X\times\Y)\) such that \(\pi(A\times\Y)=\mu(A)\)
for all Borel sets \(A\subseteq\X\) and \(\pi(\X\times B)=\nu(B)\) for all Borel sets \(B\subseteq\Y\).
\medskip

For a complete separable metric space \((\X,\sfd)\), the \emph{truncated \(1\)-Wasserstein distance} is given by
\begin{equation}\label{eq:trunc_1-Wasserstein}
W(\mu,\nu)\coloneqq\inf_{\pi\in{\rm Adm}(\mu;\nu)}\int\sfd\wedge 1\,\d\pi
\quad\text{ for every }\mu,\nu\in\mathscr P(\X).
\end{equation}
Then \(W\) is a distance on \(\mathscr P(\X)\) that metrises the weak convergence of probability measures
(see \cite[Corollary 6.13]{Villani09}); by `weak convergence' we mean convergence in duality with bounded continuous functions,
and we use the notation \(\mu_n\rightharpoonup\mu\) to denote weak convergence. \((\mathscr P(\X),W)\) is complete and separable.
We restrict ourselves to truncated Wasserstein distances of exponent \({\rm p}=1\) just for simplicity of presentation;
see Remark \ref{rmk:about_p-Wasserstein} for a brief discussion of the case \({\rm p}\in(1,\infty)\).
\medskip

In the next result, we collect some basic facts about \(W\), including their proofs for convenience.
\begin{lemma}\label{lem:basic_prop_W}
Let \((\X,\sfd)\) be a complete separable metric space. Then the following properties hold:
\begin{itemize}
\item[\(\rm i)\)] If \(\mu\in\mathscr P(\X)\) is supported on a set \(\{x_1,\ldots,x_n\}\subseteq\X\),
then for any \(\varepsilon\in(0,1)\) it holds that
\[
\nu\bigg(\bigcup_{i=1}^n B_\varepsilon(x_i)\bigg)\geq 1-\varepsilon\quad
\text{ for every }\nu\in\mathscr P(\X)\text{ with }W(\mu,\nu)<\varepsilon^2.
\]
\item[\(\rm ii)\)] If \(\mu\in\mathscr P(\X)\) and \(B\subseteq\X\) is a Borel set with \(\mu(B)>0\),
then \(\mu_B\coloneqq\mu(B)^{-1}\mu|_B\) satisfies
\[
W(\mu,\mu_B)\leq 1-\mu(B).
\]
\item[\(\rm iii)\)] If \(\mu\in\mathscr P(\X)\) is concentrated on \(\bigcup_{i=1}^n B_r(x_i)\) for some
\(x_1,\ldots,x_n\in\X\) and \(r>0\), then it holds that \(W(\mu,\sum_{i=1}^n\mu(D_i)\delta_{x_i})\leq r\),
where \(D_i\coloneqq B_r(x_i)\setminus\bigcup_{j<i}B_r(x_j)\) for all \(i=1,\ldots,n\).
\item[\(\rm iv)\)] Given any \(\mu=\sum_{i=1}^n\alpha_i\delta_{x_i}\in\mathscr P(\X)\) and
\(\nu=\sum_{i=1}^n\beta_i\delta_{y_i}\in\mathscr P(\X)\), it holds that
\[
W(\mu,\nu)\leq\max_{i=1,\ldots,n}\sfd(x_i,y_i)\wedge 1+n\max_{i=1,\ldots,n}|\alpha_i-\beta_i|.
\]
\end{itemize}
\end{lemma}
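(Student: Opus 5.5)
Each of the four items is a short computation with an explicit transport plan, or with the marginal inequality $\int \sfd\wedge1\,\d\pi \geq \pi(\{\sfd\wedge 1\geq\varepsilon\})\,\varepsilon$. I would handle them in order.

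\emph{Item i).} Fix $\nu$ with $W(\mu,\nu)<\varepsilon^2$ and choose $\pi\in{\rm Adm}(\mu;\nu)$ with $\int\sfd\wedge1\,\d\pi<\varepsilon^2$; an almost-optimal plan suffices, so existence of an optimal one is not needed. Since $\mu$ is concentrated on $\{x_1,\ldots,x_n\}$, the plan $\pi$ is concentrated on $\{x_1,\ldots,x_n\}\times\X$. Set $U\coloneqq\bigcup_iB_\varepsilon(x_i)$. If $(x_i,y)\in{\rm spt}$ and $y\notin U$, then $\sfd(x_i,y)\wedge1\geq\varepsilon$, because $\varepsilon<1$. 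Chebyshev's inequality then gives
\[
\nu(\X\setminus U)=\pi(\X\times(\X\setminus U))\leq\varepsilon^{-1}\int\sfd\wedge1\,\d\pi<\varepsilon.
\]

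\emph{Item ii).} Write $\mu=\mu|_B+\mu|_{B^c}$ and use the plan
\[
\pi\coloneqq({\rm id},{\rm id})_\#\mu|_B+\mu|_{B^c}\otimes\frac{\mu|_B\cdot(1-\mu(B))}{\mu(B)\,(1-\mu(B))}.
\]
More simply, $\pi\coloneqq({\rm id},{\rm id})_\#\mu|_B+\mu(B)^{-1}\mu|_{B^c}\otimes\mu|_B$. Its first marginal is $\mu|_B+\mu|_{B^c}=\mu$. Its second marginal is $\mu|_B+\mu(B)^{-1}(1-\mu(B))\mu|_B=\mu(B)^{-1}\mu|_B=\mu_B$. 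The diagonal part costs nothing, and the rest has mass $1-\mu(B)$ with integrand at most $1$. This gives the bound $W(\mu,\mu_B)\leq 1-\mu(B)$.

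\emph{Item iii).} The sets $D_i$ are disjoint Borel sets covering a set of full $\mu$-measure. Define $T(x)\coloneqq x_i$ for $x\in D_i$ and take $\pi\coloneqq({\rm id},T)_\#\mu$. Then $T_\#\mu=\sum_i\mu(D_i)\delta_{x_i}$, and $\sfd(x,T(x))<r$ $\mu$-a.e., so the cost is at most $r$.

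\emph{Item iv).} This is the only item needing some care. I would interpolate through $\sigma\coloneqq\sum_i\alpha_i\delta_{y_i}$ and use the triangle inequality $W(\mu,\nu)\leq W(\mu,\sigma)+W(\sigma,\nu)$.

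For the first term, the plan $\sum_i\alpha_i\delta_{(x_i,y_i)}$ gives $W(\mu,\sigma)\leq\max_i\sfd(x_i,y_i)\wedge1$.

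For the second term, $\sigma$ and $\nu$ sit on the same points (possibly repeated, which is harmless) with weights $\alpha_i$ and $\beta_i$. Let $m_i\coloneqq\alpha_i\wedge\beta_i$. Keep $\sum_i m_i\delta_{(y_i,y_i)}$ on the diagonal, and couple the leftover measures $\sum_i(\alpha_i-m_i)\delta_{y_i}$ and $\sum_i(\beta_i-m_i)\delta_{y_i}$ by a normalised product. Both leftovers have the same mass, namely
\[
1-\sum_i m_i=\sum_i(\alpha_i-m_i)\leq\sum_i|\alpha_i-\beta_i|.
\]
The cost is at most this leftover mass, hence at most $n\max_i|\alpha_i-\beta_i|$; if the leftover mass is zero the diagonal plan alone works. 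Summing the two estimates gives the claim.

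I expect no real obstacle. The main points to check are that the marginals in ii) and iv) are correct, and that the triangle inequality for $W$ may be used, which the paper states when it says $W$ is a distance.
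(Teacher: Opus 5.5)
Your proposal is correct and follows the paper's proof. Item i) uses the same Chebyshev argument, and items ii) and iii) use the same plans: your \(({\rm id},T)_\#\mu\) coincides with the paper's \(\sum_i(\mu|_{D_i})\otimes\delta_{x_i}\). In iv) you factor the paper's single plan (\(\sum_i m_i\delta_{(x_i,y_i)}\) plus the normalised product of the leftovers) through \(\sigma=\sum_i\alpha_i\delta_{y_i}\) and the triangle inequality, which gives the same bound. You also handle the degenerate case \(\sum_i m_i=1\), where the paper's normalising factor is undefined, which is a small welcome addition.
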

\begin{proof}
\ \\
\(\boldsymbol{{\rm i)}.}\) Take \(\pi\in{\rm Adm}(\mu;\nu)\) with \(\int\sfd\wedge 1\,\d\pi<\varepsilon^2\).
Using \(\{x_1,\ldots,x_n\}\times(\X\setminus\bigcup_{i=1}^n B_\varepsilon(x_i))\subseteq\{\sfd\wedge 1\geq\varepsilon\}\)
and Chebyshev's inequality, we obtain that
\[\begin{split}
1&=\mu(\{x_1,\ldots,x_n\})=\pi(\{x_1,\ldots,x_n\}\times\X)\leq
\pi\bigg(\X\times\bigcup_{i=1}^n B_\varepsilon(x_i)\bigg)+\pi(\{\sfd\wedge 1\geq\varepsilon\})\\
&\leq\nu\bigg(\bigcup_{i=1}^n B_\varepsilon(x_i)\bigg)+\frac{1}{\varepsilon}\int\sfd\wedge 1\,\d\pi
<\nu\bigg(\bigcup_{i=1}^n B_\varepsilon(x_i)\bigg)+\varepsilon.
\end{split}\]
\noindent\(\boldsymbol{{\rm ii)}.}\) Consider
\(\pi\coloneqq({\rm id}_\X,{\rm id}_\X)_\#(\mu|_B)+(\mu|_{\X\setminus B})\otimes\mu_B\in{\rm Adm}(\mu;\mu_B)\).

\noindent\(\boldsymbol{{\rm iii)}.}\) Consider \(\pi\coloneqq\sum_{i=1}^n(\mu|_{D_i})\otimes\delta_{x_i}\in{\rm Adm}(\mu;\tilde\mu)\),
where \(\tilde\mu\coloneqq\sum_{i=1}^n\mu(D_i)\delta_{x_i}\).

\noindent\(\boldsymbol{{\rm iv)}.}\) Consider
\(\pi\coloneqq\sum_{i=1}^n m_i\delta_{(x_i,y_i)}+(1-\sum_{i=1}^n m_i)^{-1}(\sum_{i=1}^n(\alpha_i-m_i)\delta_{x_i})\otimes(\sum_{i=1}^n(\beta_i-m_i)\delta_{y_i})\),
where \(m_i\coloneqq\alpha_i\wedge\beta_i\) for every \(i=1,\ldots,n\). Straightforward
computations yield \(\pi\in{\rm Adm}(\mu;\nu)\).
\end{proof}
\section{The space \texorpdfstring{\(\mathscr P_\sim(\mathbb U)\)}{Psim(U)}}
We equip the space \(\mathscr P(\mathbb U)\) of Borel probability measures on \(\mathbb U\)
with the equivalence relation \(\sim\) defined in \eqref{eq:def_equiv_rel_P(U)}, namely, for any
\(\mu,\nu\in\mathscr P(\mathbb U)\) we declare that
\[
\mu\sim\nu\qquad\Longleftrightarrow\qquad\phi_\#\mu=\nu\;\text{ for some isometric bijection }
\phi\colon{\rm spt}(\mu)\to{\rm spt}(\nu).
\]
We define \(\mathscr P_\sim(\mathbb U)\coloneqq\mathscr P(\mathbb U)/\sim\) and we denote by
\([\mu]\in\mathscr P_\sim(\mathbb U)\) the equivalence class of \(\mu\in\mathscr P(\mathbb U)\).
We equip the space \(\mathscr P_\sim(\mathbb U)\) with the Hausdorff metric \(W_H\) induced by \(W\),
which is defined as in \eqref{eq:def_W_H}. The main goal of this section is to prove the following statement:
\begin{theorem}\label{thm:equiv_W_H}
\((\mathscr P_\sim(\mathbb U),W_H)\) is a complete separable length metric space
whose induced topology coincides with the quotient topology. Moreover, it holds that
\begin{equation}\label{eq:equiv_W_H}
\bar W(\theta,\eta)={\rm dist}_W(\theta,\eta)=W_H(\theta,\eta)\quad\text{ for every }\theta,\eta\in\mathscr P_\sim(\mathbb U),
\end{equation}
where \(\bar W\), \({\rm dist}_W\) and \(W_H\) are defined as in Section \ref{s:quotients_metric_space}.
\end{theorem}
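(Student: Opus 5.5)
The plan is to apply Proposition \ref{prop:act_by_isom_conseq} with \(\X=\mathscr P(\mathbb U)\), \(\sfd=W\), \(\mathcal R=\sim\), and \(G={\rm Iso}(\mathbb U)\) acting by pushforward, \(\varphi.\mu\coloneqq\varphi_\#\mu\). Each \(\varphi\in{\rm Iso}(\mathbb U)\) acts as a \(W\)-isometry: if \(\pi\in{\rm Adm}(\mu;\nu)\), then \((\varphi\times\varphi)_\#\pi\in{\rm Adm}(\varphi_\#\mu;\varphi_\#\nu)\) has the same cost, and the inverse map gives the reverse inequality. Since \((\mathscr P(\mathbb U),W)\) is complete and separable, the proposition will give \eqref{eq:equiv_W_H}, that \(W_H\) is a distance inducing the quotient topology, and completeness. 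Separability then follows because the quotient map \(\mu\mapsto[\mu]\) is \(1\)-Lipschitz from \(W\) to \(W_H\) (by \eqref{eq:ineq_quot_pseudodist} together with \(\bar W=W_H\)), so it maps a countable dense set onto a countable dense set.

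The main step is to verify that every \(\sim\)-class equals the \(W\)-closure of an \({\rm Iso}(\mathbb U)\)-orbit. One inclusion is immediate: if \(\nu=\varphi_\#\mu\), then \(\varphi|_{{\rm spt}(\mu)}\) is an isometry onto \({\rm spt}(\nu)\) by Remark \ref{rmk:suff_cond_isom_surj}, so the orbit of \(\mu\) lies in \([\mu]\).

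\textbf{Orbit closure is contained in the class.} Suppose \(\varphi_{n\#}\mu\rightharpoonup\nu\). Since \(\{\varphi_{n\#}\mu\}\) converges it is tight, so there are compacts \(K_k\) with \(\varphi_{n\#}\mu(K_k)\geq 1-1/k\). I would build an isometry \({\rm spt}(\mu)\to{\rm spt}(\nu)\) as follows. Fix a countable dense set \(\{x_j\}\subseteq{\rm spt}(\mu)\). Using tightness and the positivity of \(\mu(B_r(x_j))\), the points \(\varphi_n(x_j)\) stay in a compact set along a subsequence, so a diagonal argument gives limits \(\varphi_n(x_j)\to y_j\). The map \(x_j\mapsto y_j\) is isometric and extends to an isometry \(\psi\colon{\rm spt}(\mu)\to\mathbb U\) with \(\varphi_n\to\psi\) pointwise on \({\rm spt}(\mu)\) (this uses equicontinuity), and uniformly on compacts. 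Then \(\psi_\#\mu=\nu\) by dominated convergence tested against bounded Lipschitz functions, and Remark \ref{rmk:suff_cond_isom_surj} shows \(\mu\sim\nu\). The delicate point is that \(\varphi_n(x_j)\) remains precompact. Tightness alone says \(\varphi_n(B_r(x_j))\) meets \(K_k\) for \(k\) large relative to \(\mu(B_r(x_j))\), so \(\varphi_n(x_j)\) lies within \(r\) of \(K_k\). Letting \(r\to0\) and using a totally bounded enlargement argument should give precompactness.

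\textbf{Class is contained in the orbit closure.} This is the step I expect to be the main obstacle, and it needs ultrahomogeneity. Let \(\phi\colon{\rm spt}(\mu)\to{\rm spt}(\nu)\) be an isometry with \(\phi_\#\mu=\nu\). Given \(\varepsilon>0\), choose a compact \(K\subseteq{\rm spt}(\mu)\) with \(\mu(K)\geq1-\varepsilon\); by inner regularity \(K\) can be taken inside the support. By Theorem \ref{thm:ext_Iso_cpt}, \(\phi|_K\) extends to some \(\varphi\in{\rm Iso}(\mathbb U)\). Then the plan \((\varphi\times\phi)_\#\mu\in{\rm Adm}(\varphi_\#\mu;\nu)\) has cost at most \(\mu({\rm spt}(\mu)\setminus K)\leq\varepsilon\), since the truncated distance is at most \(1\) off \(K\) and zero on \(K\). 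Hence \(\nu\in\overline{{\rm Iso}(\mathbb U).\mu}\). A finite-set approximation together with plain ultrahomogeneity and Lemma \ref{lem:basic_prop_W}(iii),(iv) would also work, but Theorem \ref{thm:ext_Iso_cpt} is cleaner.

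\textbf{Length property.} For \(\theta=[\mu]\) and \(\eta=[\nu]\) and \(\varepsilon>0\), pick representatives with \(W(\mu,\nu)<W_H(\theta,\eta)+\varepsilon\). The curve \(t\mapsto[(1-t)\mu+t\nu]\) satisfies \(W((1-t)\mu+t\nu,(1-s)\mu+s\nu)\leq|t-s|\,W(\mu,\nu)\), by taking the plan that keeps the common part \(((1-\max(t,s))\mu+\min(t,s)\nu)\) fixed and transports the remaining \(|t-s|\)-portion with an optimal plan. Since the quotient map is \(1\)-Lipschitz, this curve has \(W_H\)-length at most \(W(\mu,\nu)\), so \(W_H\) is a length metric.
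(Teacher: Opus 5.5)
Your proposal is correct and follows the paper's route: you apply Proposition \ref{prop:act_by_isom_conseq} to the pushforward action of ${\rm Iso}(\mathbb U)$, use Theorem \ref{thm:ext_Iso_cpt} to place each class inside an orbit closure, use a tightness/diagonal/isometric-extension argument to place orbit closures inside classes, and use linear interpolation for the length property. The only minor difference is how you get precompactness of $(\varphi_n(x_j))_n$: you argue directly for every point of ${\rm spt}(\mu)$ (small balls have positive mass, which gives total boundedness), whereas the paper (Lemma \ref{lem:theta_closed}) proves closedness of each class for general sequences by choosing the dense points inside the large-measure sets $F_k$; both arguments work.
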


To prove Theorem \ref{thm:equiv_W_H}, we proceed as follows: we show that the isometry group
\({\rm Iso}(\mathbb U)\) acts on \(\mathscr P(\mathbb U)\) by isometries and the closures of its
\({\rm Iso}(\mathbb U)\)-orbits coincide with the elements of \(\mathscr P_\sim(\mathbb U)\) (see Lemma
\ref{lem:Iso_acts_by_isom}). Theorem \ref{thm:equiv_W_H} follows by applying the general principle
presented in Proposition \ref{prop:act_by_isom_conseq}.
\begin{lemma}\label{lem:theta_closed}
Every set \(\theta\in\mathscr P_\sim(\mathbb U)\) is a closed subset of \((\mathscr P(\mathbb U),W)\).
\end{lemma}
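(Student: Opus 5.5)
We need to show: if \(\mu_n\in\theta\) (so \(\mu_n\sim\mu\) for a fixed \(\mu\in\theta\)) and \(W(\mu_n,\nu)\to 0\), then \(\nu\sim\mu\). The plan is to avoid the group action and work directly with the isometric bijections \(\phi_n\colon{\rm spt}(\mu)\to{\rm spt}(\mu_n)\) with \((\phi_n)_\#\mu=\mu_n\), extracting a limit isometry \({\rm spt}(\mu)\to{\rm spt}(\nu)\) by an Arzel\`a--Ascoli/diagonal argument on a countable dense subset of \({\rm spt}(\mu)\).

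\emph{Step 1 (tightness).} Since \(\mu_n\rightharpoonup\nu\) in the Polish space \(\mathbb U\), Prokhorov's theorem gives that \(\{\mu_n\}_n\cup\{\nu\}\) is tight. For each \(k\in\N\) pick a compact \(K_k\subseteq\mathbb U\) with \(\mu_n(K_k)\geq 1-1/k\) for all \(n\).

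\emph{Step 2 (points stay bounded).} Fix a countable dense set \(\{x_j\}_j\subseteq{\rm spt}(\mu)\). For each \(j\) and \(r>0\) we have \(\mu(B_r(x_j))>0\), hence \(\mu_n(B_r(\phi_n(x_j)))=\mu(B_r(x_j))\eqqcolon c>0\), a quantity independent of \(n\). Choose \(k\) with \(1/k<c\). Then every ball \(B_r(\phi_n(x_j))\) meets \(K_k\), so \(\phi_n(x_j)\) lies in the compact \(r\)-enlargement of \(K_k\). Letting \(r\to0\) along a sequence and diagonalising, I obtain a subsequence along which \(\phi_n(x_j)\to y_j\) for every \(j\). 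This step is the main obstacle: showing that the \(\phi_n(x_j)\) are totally bounded. The argument works because for every \(r\) the sequence is eventually within \(r\) of a compact set, which makes it totally bounded. Completeness of \(\mathbb U\) then gives convergent subsequences.

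\emph{Step 3 (limit isometry).} Set \(\phi(x_j)\coloneqq y_j\). Then \(\sfd_{\mathbb U}(y_i,y_j)=\sfd(x_i,x_j)\), so \(\phi\) is an isometry on a dense set and extends to an isometry \(\phi\colon{\rm spt}(\mu)\to\mathbb U\). The equicontinuity of the \(\phi_n\) (all are \(1\)-Lipschitz) gives \(\phi_n\to\phi\) pointwise on all of \({\rm spt}(\mu)\). Dominated convergence then yields, for \(f\) bounded continuous, \(\int f\,\d\mu_n=\int f\circ\phi_n\,\d\mu\to\int f\circ\phi\,\d\mu\). Hence \(\phi_\#\mu=\nu\) by uniqueness of weak limits.

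\emph{Step 4.} Finally, Remark \ref{rmk:suff_cond_isom_surj} shows that \(\phi\) is an isometric bijection \({\rm spt}(\mu)\to{\rm spt}(\nu)\). Thus \(\nu\sim\mu\), i.e.\ \(\nu\in\theta\), and \(\theta\) is closed.
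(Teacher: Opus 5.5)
Your proof is correct, and its skeleton is the paper's: tightness via Prokhorov, diagonal extraction of limits of \(\phi_n(x_j)\) along a countable dense subset of \({\rm spt}(\mu)\), extension to an isometry \(\phi\), pointwise convergence \(\phi_n\to\phi\) by equicontinuity, \(\phi_\#\mu=\nu\), and Remark~\ref{rmk:suff_cond_isom_surj}.

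The real difference is how you show that \((\phi_n(x_j))_n\) is precompact. The paper fixes compact sets \(H_k\) and forms \(F_k=\bigcap_{n\geq k}\phi_n^{-1}({\rm spt}(\mu_n)\cap H_k)\). It then claims \(\bigcup_kF_k\) has full measure and picks the dense sequence with \(x_k\in F_k\), so that \(\phi_n(x_k)\in H_k\) for all \(n\geq k\).

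You use instead that each \(\phi_n\) is measure preserving. Thus \(\mu_n(B_r(\phi_n(x)))=\mu(B_r(x))>0\) is independent of \(n\), and this forces \(\phi_n(x)\) to lie within \(r\) of a single compact set, for every \(r>0\). Your device therefore works at every point of \({\rm spt}(\mu)\), so any dense sequence will do.

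It also bypasses the estimate of \(\mu_1(\mathbb U\setminus F_k)\), which is the weak spot of the paper's route as written. From \(\mu_n(\mathbb U\setminus H_k)\leq 2^{-k}\) alone, the union bound \(\sum_{n\geq k}\mu_n(\mathbb U\setminus H_k)\) is not small. With admissible \(H_k\), the set \(F_k\) can even be empty. For instance, take all \(\mu_n\) equal to a measure spread uniformly, with masses \(2^{-i}\), over infinitely many circles at mutual distance \(2\). Let \(\phi_n\) rotate every circle by \(n\) times an irrational angle, and let \(H_k\) consist of the first \(k+1\) circles, each with a short open arc removed.

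One slip in Step~2: the \(r\)-enlargement of \(K_k\) is not compact, since \(\mathbb U\) is not locally compact. What you actually need, and state correctly right afterwards, is that \(\{\phi_n(x_j)\}_n\) lies in the \(r\)-neighbourhood of some compact set for every \(r>0\). Hence it is totally bounded, and therefore relatively compact in the complete space \(\mathbb U\). The diagonal extraction is then over \(j\), not over \(r\).
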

\begin{proof}
Let \((\mu_n)_{n\in\N}\subseteq\theta\) and \(\mu\in\mathscr P(\mathbb U)\) satisfy \(W(\mu_n,\mu)\to 0\).
For any \(n\in\N\), take an isometry \(\phi_n\colon{\rm spt}(\mu_1)\to{\rm spt}(\mu_n)\) such that
\((\phi_n)_\#\mu_1=\mu_n\). Since \((\mu_n)_{n\in\N}\) is tight by Prokhorov's theorem, we can find an increasing
sequence \((H_k)_{k\in\N}\) of compact subsets of \(\mathbb U\) such that -- up to a subsequence --
\(\mu_n(\mathbb U\setminus H_k)\leq 2^{-k}\) for every \(k,n\in\N\) with \(k\leq n\).
Define the increasing sequence \((F_k)_{k\in\N}\) as
\[
F_k\coloneqq\bigcap_{n\geq k}\phi_n^{-1}({\rm spt}(\mu_n)\cap H_k)\quad\text{ for every }k\in\N
\]
and \(F\coloneqq\bigcup_{k\in\N}F_k\subseteq{\rm spt}(\mu_1)\). Since
\(\mu_1(\mathbb U\setminus F_k)\leq\sum_{n\geq k}\mu_1(\phi_n^{-1}(\mathbb U\setminus H_k))\leq 2^{-k+1}\)
for every \(k\in\N\), we deduce that \(\mu_1(\mathbb U\setminus F)=0\), thus in particular \(F\) is dense
in \({\rm spt}(\mu_1)\). Fix a dense sequence \((x_k)_{k\in\N}\) in \({\rm spt}(\mu_1)\) such that
\(x_k\in F_k\) for every \(k\in\N\). For any \(k\in\N\) we have that the sequence \((\phi_n(x_k))_{n\geq k}\)
is contained in the compact set \(H_k\), thus a diagonalisation argument ensures that -- up to a further
subsequence -- the limit \(\tilde\phi(x_k)\coloneqq\lim_n\phi_n(x_k)\in H_k\) exists for every \(k\in\N\).
Since each \(\phi_n\) is isometric, the resulting map \(\tilde\phi\colon\{x_k:k\in\N\}\to\mathbb U\) is an isometry.
Hence, \(\tilde\phi\) uniquely extends to an isometry \(\phi\colon{\rm spt}(\mu_1)\to\mathbb U\), which is also
the pointwise limit of \(\phi_n\) as \(n\to\infty\). It follows that \(\mu_n=(\phi_n)_\#\mu_1\rightharpoonup\phi_\#\mu_1\),
so that \(\phi_\#\mu_1=\mu\) and thus \(\mu\in[\mu_1]=\theta\) by Remark \ref{rmk:suff_cond_isom_surj}.
\end{proof}

Note that the isometry group \({\rm Iso}(\mathbb U)\) acts on \(\mathscr P(\mathbb U)\) via pushforward as \(\varphi.\mu\coloneqq\varphi_\#\mu\).
\begin{lemma}\label{lem:Iso_acts_by_isom}
\({\rm Iso}(\mathbb U)\) acts on \(\mathscr P(\mathbb U)\) by isometries. Moreover, the \(\sim\)-equivalence classes
coincide with the closures of the \({\rm Iso}(\mathbb U)\)-orbits in \(\mathscr P(\mathbb U)\).
\end{lemma}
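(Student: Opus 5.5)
First I check that the action is by isometries. Fix \(\varphi\in{\rm Iso}(\mathbb U)\) and \(\mu,\nu\in\mathscr P(\mathbb U)\). For any \(\pi\in{\rm Adm}(\mu;\nu)\), the measure \((\varphi\times\varphi)_\#\pi\) belongs to \({\rm Adm}(\varphi_\#\mu;\varphi_\#\nu)\). Since \(\sfd_{\mathbb U}\circ(\varphi\times\varphi)=\sfd_{\mathbb U}\), it has the same cost. This gives \(W(\varphi_\#\mu,\varphi_\#\nu)\leq W(\mu,\nu)\). Applying the same argument to \(\varphi^{-1}\) gives the reverse inequality, so the two are equal.

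Next I show that the closure of the orbit \({\rm Iso}(\mathbb U).\mu\) is contained in \([\mu]\). For \(\varphi\in{\rm Iso}(\mathbb U)\), the restriction \(\varphi|_{{\rm spt}(\mu)}\) is an isometry onto its image. It pushes \(\mu\) to \(\varphi_\#\mu\), so by Remark \ref{rmk:suff_cond_isom_surj} it is an isometric bijection onto \({\rm spt}(\varphi_\#\mu)\). Hence \(\varphi_\#\mu\sim\mu\), i.e.\ the orbit lies in \([\mu]\). Since \([\mu]\) is \(W\)-closed by Lemma \ref{lem:theta_closed}, the whole closure of the orbit lies in \([\mu]\).

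The main step is the converse: every \(\nu\in[\mu]\) is a \(W\)-limit of measures \(\varphi_\#\mu\) with \(\varphi\in{\rm Iso}(\mathbb U)\). Let \(\phi\colon{\rm spt}(\mu)\to{\rm spt}(\nu)\) be an isometric bijection with \(\phi_\#\mu=\nu\), and fix \(\varepsilon>0\).
\begin{itemize}
\item By tightness (\(\mathbb U\) is Polish), choose a compact set \(K\subseteq{\rm spt}(\mu)\) with \(\mu(K)\geq 1-\varepsilon\).
\item The map \(\phi|_K\colon K\to\mathbb U\) is an isometry on a compact set. By Theorem \ref{thm:ext_Iso_cpt} it extends to some \(\varphi\in{\rm Iso}(\mathbb U)\).
\item Consider the plan \(\pi\coloneqq(\varphi,\phi)_\#\mu\in{\rm Adm}(\varphi_\#\mu;\nu)\). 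It is supported on the diagonal over \(K\), where \(\varphi=\phi\).
\item Therefore \(W(\varphi_\#\mu,\nu)\leq\int\sfd_{\mathbb U}(\varphi,\phi)\wedge1\,\d\mu\leq\mu({\rm spt}(\mu)\setminus K)\leq\varepsilon\).
\end{itemize}
Letting \(\varepsilon\to0\) shows \(\nu\) lies in the closure of the orbit. Combining with the previous paragraph, the closure of the orbit equals \([\mu]\).

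The only delicate point is extending isometries that are defined on an infinite set. Finite ultrahomogeneity alone would require an additional approximation step: take a finite \(\delta\)-net of \(K\), extend \(\phi\) on the net, and then use Lemma \ref{lem:basic_prop_W}~iii) and iv) to control the error. Huhunai\v{s}vili's theorem removes this step, so the obstacle is handled entirely by compactness via tightness.
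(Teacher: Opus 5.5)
Your proof is correct and follows the paper's argument step by step. The steps match: invariance of \(W\) via \((\varphi\times\varphi)_\#\pi\), the inclusion \(\overline{{\rm Iso}(\mathbb U).\mu}\subseteq[\mu]\) via Remark \ref{rmk:suff_cond_isom_surj} and Lemma \ref{lem:theta_closed}, and the reverse inclusion via tightness, Huhunai\v{s}vili's compact extension theorem and the plan \((\varphi,\phi)_\#\mu\). One small point of wording: only the part of that plan over \(K\) lies on the diagonal, not the whole plan, but your estimate correctly charges the rest to \(\mu({\rm spt}(\mu)\setminus K)\).
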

\begin{proof}
For any \(\varphi\in{\rm Iso}(\mathbb U)\), we have that \(\pi\in {\rm Adm}(\mu;\nu)\)
if and only if \((\varphi\times\varphi)_\#\pi\in{\rm Adm}(\varphi_\#\mu;\varphi_\#\nu)\), and
\(\int\sfd_\mathbb U(x,y)\wedge 1\,\d\pi(x,y)=\int\sfd_\mathbb U(x,y)\wedge 1\,\d(\varphi\times\varphi)_\#\pi(x,y)\).
Then \(W(\varphi_\#\mu,\varphi_\#\nu)=W(\mu,\nu)\) for every \(\varphi\in{\rm Iso}(\mathbb U)\)
and \(\mu,\nu\in\mathscr P(\mathbb U)\), i.e.\ \({\rm Iso}(\mathbb U)\) acts on \(\mathscr P(\mathbb U)\)
by isometries. We also claim that
\begin{equation}\label{eq:orbits_Iso}
\mu\sim\nu\qquad\Longleftrightarrow\qquad\mu\in\overline{{\rm Iso}(\mathbb U).\nu}
\end{equation}
holds for any \(\mu,\nu\in\mathscr P(\mathbb U)\); note that \(\overline{{\rm Iso}(\mathbb U).\nu}\) is the closure
of \(\{\varphi_\#\nu:\varphi\in{\rm Iso}(\mathbb U)\}\) in \((\mathscr P(\mathbb U),W)\). To prove the implication
\(\Longrightarrow\), assume \(\mu\sim\nu\). Fix an isometry \(\phi\colon{\rm spt}(\nu)\to{\rm spt}(\mu)\)
with \(\phi_\#\nu=\mu\) and \(\varepsilon>0\). Since \(\nu\) is a Radon measure, we find a compact set
\(K\subseteq{\rm spt}(\nu)\) such that \(\nu(\mathbb U\setminus K)\leq\varepsilon\). By Theorem \ref{thm:ext_Iso_cpt},
there exists \(\varphi\in{\rm Iso}(\mathbb U)\) such that \(\varphi|_K=\phi|_K\). Since
\((\phi,\varphi)_\#\nu\in{\rm Adm}(\mu;\varphi_\#\nu)\),
\[
W(\mu,\varphi_\#\nu)\leq\int\sfd_{\mathbb U}\wedge 1\,\d(\phi,\varphi)_\#\nu
=\int_{\mathbb U\setminus K}\sfd_{\mathbb U}(\phi(z),\varphi(z))\wedge 1\,\d\nu(z)\leq\nu(\mathbb U\setminus K)\leq\varepsilon.
\]
By the arbitrariness of \(\varepsilon\), this shows that \(\mu\in\overline{{\rm Iso}(\mathbb U).\nu}\). To prove the implication
\(\Longleftarrow\), simply note that \({\rm Iso}(\mathbb U).\nu\subseteq[\nu]\), and thus \(\overline{{\rm Iso}(\mathbb U).\nu}\subseteq[\nu]\)
by Lemma \ref{lem:theta_closed}. All in all, \eqref{eq:orbits_Iso} is proven. Since \({\rm Iso}(\mathbb U)\) acts on \(\mathscr P(\mathbb U)\)
by isometries, it is easy to check that \(\mu\in\overline{{\rm Iso}(\mathbb U).\nu}\) if and only if
\(\overline{{\rm Iso}(\mathbb U).\mu}=\overline{{\rm Iso}(\mathbb U).\nu}\). In view of \eqref{eq:orbits_Iso}, we thus conclude
that \([\mu]=\overline{{\rm Iso}(\mathbb U).\mu}\) for every \(\mu\in\mathscr P(\mathbb U)\), as desired.
\end{proof}
\begin{proof}[Proof of Theorem \ref{thm:equiv_W_H}]
It follows from Lemma \ref{lem:Iso_acts_by_isom}, Proposition \ref{prop:act_by_isom_conseq} and the completeness
of \(W\) that \((\mathscr P_\sim(\mathbb U),W_H)\) is a complete metric space whose induced
topology is the quotient topology, and that \eqref{eq:equiv_W_H} holds. Since \((\mathscr P(\mathbb U),W)\)
is separable, we deduce that \((\mathscr P_\sim(\mathbb U),W_H)\) is separable (as quotients of separable
topological spaces are separable). Since \(W\leq 1\), we have that \(W_H\leq 1\), thus accordingly
\((\mathscr P_\sim(\mathbb U),W_H)\) is a metric space. Finally, \((\mathscr P(\mathbb U),W)\) is a geodesic space
(a geodesic between \(\mu,\nu\in\mathscr P(\mathbb U)\) is given by the linear interpolation curve \([0,1]\ni t\mapsto(1-t)\mu+t\nu\),
as easy computations show), whence it readily follows that \(W_H={\rm dist}_W\) is a length distance.
\end{proof}

We remark that, if instead of \(\mathbb U\) we consider any universal complete separable metric space \({\rm Z}\)
(for instance, \({\rm Z}=C([0,1])\) endowed with the supremum norm, which is universal by the Banach--Mazur theorem),
then \({\rm Iso}({\rm Z})\) still acts on \(\mathcal P({\rm Z})\) by isometries. However,
the ultrahomogeneity of \(\mathbb U\) is essential in proving that \(W_H={\rm dist}_W=\bar W\),
and hence that \(W_H\) induces the quotient topology.
\begin{remark}{\rm
We claim that the diameter of \((\mathscr P_\sim(\mathbb U),W_H)\) is equal to \(1\), but it is not attained.

Since \(W_H\leq 1\), the diameter of \((\mathscr P_\sim(\mathbb U),W_H)\) is at most \(1\). To prove that it is equal to \(1\),
take \((p_n)_{n\in\N}\subseteq\mathbb U\) so that \(\sfd_{\mathbb U}(p_n,p_m)=2\) for all \(n\neq m\). Any given point
\(p\in\mathbb U\) satisfies \(\sfd_{\mathbb U}(p,p_n)\geq 1\) for all but possibly one index \(n\in\N\). Since
\(W_H([\delta_{p_1}],[n^{-1}\sum_{i=1}^n\delta_{p_i}])=\frac{1}{n}\inf_{p\in\mathbb U}\sum_{i=1}^n\sfd_{\mathbb U}(p,p_i)\wedge 1\),
we have \(W_H([\delta_{p_1}],[n^{-1}\sum_{i=1}^n\delta_{p_i}])\geq\frac{n-1}{n}\to 1\) as \(n\to\infty\).
Finally, let us prove that \(W_H(\theta,\eta)<1\) for every \(\theta,\eta\in\mathscr P_\sim(\mathbb U)\). More precisely,
for any \(\mu,\nu\in\mathscr P(\mathbb U)\) it holds that
\begin{equation}\label{eq:upper_bound_W_H}
W_H([\mu],[\nu])\leq 1-\sup_{0<r<1/2}(1-2r)\sup_{p\in{\rm spt}(\mu)}\sup_{q\in{\rm spt}(\nu)}\min\{\mu(B_r(p)),\nu(B_r(q))\}.
\end{equation}
To prove it, fix any \(r\in(0,1/2)\), \(p\in{\rm spt}(\mu)\) and \(q\in{\rm spt}(\nu)\). Take a map \(\varphi\in{\rm Iso}(\mathbb U)\)
with \(\varphi(q)=p\) and set \(\bar\nu\coloneqq\varphi_\#\nu\). Letting \(\alpha\coloneqq\mu(B_r(p))\),
\(\beta\coloneqq\bar\nu(B_r(p))=\nu(B_r(q))\) and \(\varepsilon\coloneqq\min\{\alpha,\beta\}\), we define
\[
\pi\coloneqq\frac{\varepsilon}{\alpha\beta}(\mu|_{B_r(p)})\otimes(\bar\nu|_{B_r(p)})
+\frac{1}{(1-\varepsilon)\alpha\beta}(\alpha\mu-\varepsilon\mu|_{B_r(p)})\otimes
(\beta\bar\nu-\varepsilon\bar\nu|_{B_r(p)}).
\]
Given that \(\sfd_{\mathbb U}(x,y)<2r\) for every \(x,y\in B_r(p)\) and \(\pi\in{\rm Adm}(\mu;\bar\nu)\),
we conclude that
\[
W_H([\mu],[\nu])\leq W(\mu,\bar\nu)
\leq\frac{\varepsilon}{\alpha\beta}2r\,\mu(B_r(p))\bar\nu(B_r(p))+(1-\varepsilon)=1-(1-2r)\varepsilon,
\]
whence \eqref{eq:upper_bound_W_H} follows by the arbitrariness of \(r\), \(p\) and \(q\).
\fr}\end{remark}

There is an active line of research devoted to the study of the isometry groups of Wasserstein spaces,
see, e.g., the works 
\cite{Kloeckner2010,GeherTitkosVirosztek2019,GeherTitkosVirosztek2022,SantosRodriguez22,GeherHruskovaTitkosVirosztek2025},
to name but a few. Every isometry of a metric space induces -- by pushforward -- an isometry of its Wasserstein space.
A fundamental question is whether all Wasserstein isometries arise in this way. This leads to a dichotomy between
\emph{isometrically rigid} Wasserstein spaces, whose isometries are all induced by isometries of the underlying metric
space, and \emph{flexible} ones, which admit other isometries as well. It would be interesting to investigate such
questions when the underlying metric space is the Urysohn universal space \(\mathbb U\).
\section{The measured-Gromov convergence}
In analogy with the notion of pmG-convergence developed in \cite{GigliMondinoSavare15}, we give the following definition:
\begin{definition}[mG convergence]\label{def:mG}
Let \(([\X_n,\sfd_n,\mm_n])_{n\in\bar\N}\subseteq\mathbb X_1\) be given, where \(\bar\N\coloneqq\N\cup\{\infty\}\).
Then we say that \([\X_n,\sfd_n,\mm_n]\) \emph{mG-converges} to \([\X_\infty,\sfd_\infty,\mm_\infty]\) if
there exist a complete separable metric space \((\Y,\sfd_\Y)\) and isometries \(\iota_n\colon\X_n\to\Y\)
such that \((\iota_n)_\#\mm_n\) weakly converges to \((\iota_\infty)_\#\mm_\infty\).
\end{definition}
\begin{definition}\label{def:Phi}
We define the map \(\Phi\colon\mathbb X_1\to\mathscr P_\sim(\mathbb U)\) as follows: for any \([\X,\sfd,\mm]\in\mathbb X_1\), we set
\[
\Phi([\X,\sfd,\mm])\coloneqq[f_\#\mm]\quad\text{ for any isometry }f\colon\X\to\mathbb U.
\]
\end{definition}
Taking Remark \ref{rmk:suff_cond_isom_surj} into account, it is straightforward to check that the map
\(\Phi\) is well defined and bijective.
By applying Theorem \ref{thm:equiv_W_H}, we then deduce that
\begin{equation}\label{eq:def_dmG}
\sfd_{\rm mG}\coloneqq\Phi^*W_H\colon\mathbb X_1\times\mathbb X_1\to[0,1]
\end{equation}
is a complete and separable distance on \(\mathbb X_1\); recall from \eqref{eq:def_pullback_pseudodist} the
definition of \(\Phi^*W_H\).
\begin{theorem}\label{thm:equiv_mG-conv}
Let \((\mathcal X_n)_{n\in\bar\N}\subseteq\mathbb X_1\) be given. Then it holds that \(\mathcal X_n\) mG-converges
to \(\mathcal X_\infty\) if and only if \(\sfd_{\rm mG}(\mathcal X_n,\mathcal X_\infty)\to 0\).
\end{theorem}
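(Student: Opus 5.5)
We prove the two implications separately. Throughout, \(\mathcal X_n=[\X_n,\sfd_n,\mm_n]\) for \(n\in\bar\N\), and we use the identity \(W_H={\rm dist}_W\) from Theorem \ref{thm:equiv_W_H}.

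\emph{From \(\sfd_{\rm mG}\to0\) to mG-convergence.} Assume \(\sfd_{\rm mG}(\mathcal X_n,\mathcal X_\infty)\to 0\). Fix an isometry \(f_\infty\colon\X_\infty\to\mathbb U\) and set \(\mu_\infty\coloneqq(f_\infty)_\#\mm_\infty\).
\begin{itemize}
\item Since \(W_H={\rm dist}_W\), for every \(n\) we can pick \(\mu_n\in\Phi(\mathcal X_n)\) with \(W(\mu_n,\mu_\infty)\leq\sfd_{\rm mG}(\mathcal X_n,\mathcal X_\infty)+1/n\).
\item Because \(\mu_n\in\Phi(\mathcal X_n)\), there is an isometry \(\phi_n\colon{\rm spt}(\mm_n)\to{\rm spt}(\mu_n)\) with \((\phi_n)_\#\mm_n=\mu_n\). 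This uses the definition of \(\Phi\) and Remark \ref{rmk:suff_cond_isom_surj}.
\item The definition of mG-convergence requires isometries defined on all of \(\X_n\), not only on the support. We extend \(\phi_n\) to an isometry \(\iota_n\colon\X_n\to\mathbb U\). One way: take any isometric embedding \(g\colon\X_n\to\mathbb U\); the map \(\phi_n\circ g^{-1}\) is an isometry from the closed set \(g({\rm spt}(\mm_n))\) onto a subset of \(\mathbb U\). Extending it to a global isometry of \(\mathbb U\) would require compactness in Theorem \ref{thm:ext_Iso_cpt}, which we do not have.
\item A cleaner route: glue \(\X_n\) to \(\mathbb U\) along \({\rm spt}(\mm_n)\cong{\rm spt}(\mu_n)\) by the standard amalgamation metric. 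The result \(\Y_n\) is a complete separable space. By the extension property of \(\mathbb U\) (the one-point extension property, iterated on a countable dense set and then extended by completeness), \(\Y_n\) embeds into \(\mathbb U\) fixing the copy of \(\mathbb U\) pointwise.
\item Alternatively, avoid \(\mathbb U\) altogether: take \(\Y\) to be the disjoint union of \(\mathbb U\) and all the \(\X_n\), glued along the supports, which is a countable amalgamation and remains complete separable.
\end{itemize}
Either way, \((\iota_n)_\#\mm_n=\mu_n\rightharpoonup\mu_\infty=(\iota_\infty)_\#\mm_\infty\), since \(W\) metrises weak convergence. I expect this extension-off-the-support step to be the only genuinely delicate point.

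\emph{From mG-convergence to \(\sfd_{\rm mG}\to0\).} Suppose \(\iota_n\colon\X_n\to\Y\) realise mG-convergence.
\begin{itemize}
\item Since \(\Y\) is complete and separable, universality gives an isometric embedding \(j\colon\Y\to\mathbb U\).
\item The pushforward \(j_\#\) is continuous for weak convergence, because \(j\) is continuous. Hence \(\mu_n\coloneqq(j\circ\iota_n)_\#\mm_n\rightharpoonup\mu_\infty\coloneqq(j\circ\iota_\infty)_\#\mm_\infty\) in \(\mathscr P(\mathbb U)\).
\item Each \(j\circ\iota_n\) is an isometry \(\X_n\to\mathbb U\), so \(\mu_n\in\Phi(\mathcal X_n)\).
\item Therefore
\[
\sfd_{\rm mG}(\mathcal X_n,\mathcal X_\infty)=W_H([\mu_n],[\mu_\infty])={\rm dist}_W([\mu_n],[\mu_\infty])\leq W(\mu_n,\mu_\infty)\to 0,
\]
using Theorem \ref{thm:equiv_W_H} and the fact that \(W\) metrises weak convergence.
\end{itemize}
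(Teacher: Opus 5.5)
Your overall argument is correct and follows the paper's proof in both directions. For mG-convergence $\Rightarrow$ $\sfd_{\rm mG}\to0$ you embed $\Y$ into $\mathbb U$ and bound $\sfd_{\rm mG}$ by $W$. For the converse you choose $\mu_n\in\Phi(\mathcal X_n)$ with $W(\mu_n,\mu_\infty)\leq\sfd_{\rm mG}(\mathcal X_n,\mathcal X_\infty)+1/n$.

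The only divergence is the extension off the support. The paper does not extend anything. It notes that $({\rm spt}(\mu_n),\sfd_{\mathbb U},\mu_n)$ is itself a representative of $\mathcal X_n$ and takes $\Y=\mathbb U$ with the inclusions. In other words, it reads Definition \ref{def:mG} as a property of the classes, which may be witnessed by any representatives. Your extra step is exactly what shows that this definition does not depend on the representatives, so it is a legitimate addition.

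However, your ``cleaner route'' is false and should be removed. If $\X_n\neq{\rm spt}(\mm_n)$, the amalgamation $\Y_n$ strictly contains $\mathbb U$. Then no isometric embedding $\Y_n\to\mathbb U$ can fix $\mathbb U$ pointwise: a point $y\in\Y_n\setminus\mathbb U$ would be sent to some $u\in\mathbb U$, which is already the image of $u\neq y$. The one-point extension property of $\mathbb U$ handles only finitely many constraints at a time. Iterating it along a countable dense set therefore cannot honour the infinitely many prescribed values on $\mathbb U$. Indeed, if ${\rm spt}(\mu_n)=\mathbb U$ and $\X_n\neq{\rm spt}(\mm_n)$, the map $\phi_n$ has no isometric extension $\X_n\to\mathbb U$ at all.

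Your second option does work: the countable amalgamation $\Y$ of $\mathbb U$ with all the $\X_n$ along ${\rm spt}(\mm_n)\cong{\rm spt}(\mu_n)$. It is complete, since points of different pieces $\X_n\setminus{\rm spt}(\mm_n)$ are at distance at least the sum of their distances to $\mathbb U$. Hence a Cauchy sequence with only finitely many terms in each piece must approach $\mathbb U$. So ``either way'' should become ``via the amalgamation''.

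A simpler fix that stays inside $\mathbb U$ uses the paper's own Lemma \ref{lem:Iso_acts_by_isom}. For any isometric embedding $g_n\colon\X_n\to\mathbb U$, the class $\Phi(\mathcal X_n)$ is the closure of the ${\rm Iso}(\mathbb U)$-orbit of $(g_n)_\#\mm_n$. So the $\mu_n$ in your first bullet can be taken of the form $(\varphi_n\circ g_n)_\#\mm_n$ with $\varphi_n\in{\rm Iso}(\mathbb U)$. Then $\iota_n\coloneqq\varphi_n\circ g_n$ is already defined on all of $\X_n$.
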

\begin{proof}
Assume \([\X_n,\sfd_n,\mm_n]\to[\X_\infty,\sfd_\infty,\mm_\infty]\) in the mG sense.
We can find isometries \(\iota_n\colon\X_n\to\mathbb U\) such that \((\iota_n)_\#\mm_n\) weakly
converges to \((\iota_\infty)_\#\mm_\infty\). Then \eqref{eq:equiv_W_H} yields
\[
\sfd_{\rm mG}([\X_n,\sfd_n,\mm_n],[\X_\infty,\sfd_\infty,\mm_\infty])\leq
W((\iota_n)_\#\mm_n,(\iota_\infty)_\#\mm_\infty)\to 0\quad\text{ as }n\to\infty.
\]
On the contrary, assume \(\sfd_{\rm mG}([\X_n,\sfd_n,\mm_n],[\X_\infty,\sfd_\infty,\mm_\infty])\to 0\).
Denote \(\theta_n\coloneqq\Phi([\X_n,\sfd_n,\mm_n])\) for every \(n\in\bar\N\) and fix some representative
\(\mu_\infty\in\theta_\infty\). For any \(n\in\N\), we find \(\mu_n\in\theta_n\) such that
\(W(\mu_n,\mu_\infty)\leq W_H(\theta_n,\theta_\infty)+1/n\), thus \(W(\mu_n,\mu_\infty)\to 0\).
Since \([{\rm spt}(\mu_n),\sfd_{\mathbb U},\mu_n]=[\X_n,\sfd_n,\mm_n]\) for every \(n\in\bar\N\),
we conclude that \([\X_n,\sfd_n,\mm_n]\to[\X_\infty,\sfd_\infty,\mm_\infty]\) in the mG sense.
\end{proof}

The mG-topology (i.e.\ the topology induced by \(\sfd_{\rm mG}\)) coincides with the topology induced by the
Gromov--Prokhorov metric as a consequence of \cite[Lemma 5.8]{GPW2009}, thus also with the Gromov-weak topology
by \cite[Section 9]{GPW2009}.
\begin{remark}\label{rmk:GW-dmG}{\rm
For any \(\mathcal X=[\X,\sfd_\X,\mm_\X]\in\mathbb X_1\) and \(\mathcal Y=[\Y,\sfd_\Y,\mm_\Y]\in\mathbb X_1\),
we define
\[
{\rm G}_W(\mathcal X,\mathcal Y)\coloneqq\inf_{({\rm Z},\sfd_{\rm Z}),\iota_\X,\iota_\Y}\inf\bigg\{
\int\sfd_{\rm Z}\wedge 1\,\d\pi\;\bigg|\;\pi\in{\rm Adm}((\iota_\X)_\#\mm_\X;(\iota_\Y)_\#\mm_\Y)\bigg\},
\]
where the first infimum is taken over all complete separable metric spaces \(({\rm Z},\sfd_{\rm Z})\)
and all isometric embeddings \(\iota_\X\colon\X\to{\rm Z}\) and \(\iota_\Y\colon\Y\to{\rm Z}\).
Note that \({\rm G}_W(\mathcal X,\mathcal Y)\) coincides with the quantity \(\mathbb G_{\rm W}(\mathcal X,\mathcal Y)\)
that is defined at the beginning of \cite[Section 3.2.4]{GigliMondinoSavare15} (with \(c(t)\coloneqq t\wedge 1\)).
We claim that
\[
{\rm G}_W(\mathcal X,\mathcal Y)=\sfd_{\rm mG}(\mathcal X,\mathcal Y)\quad\text{ for every }\mathcal X,\mathcal Y\in\mathbb X_1.
\]
To obtain \(\leq\), just observe that \({\rm G}_W(\mathcal X,\mathcal Y)\leq\int\sfd_{\mathbb U}(x,y)\wedge 1\,\d\pi(x,y)\)
for every \(\mu\in\Phi(\mathcal X)\), \(\nu\in\Phi(\mathcal Y)\) and \(\pi\in{\rm Adm}(\mu;\nu)\).
The converse inequality \(\geq\) readily follows from the universality of \(\mathbb U\).
\fr}\end{remark}
\subsection{Compactness criterion for the mG-convergence}
Our next result, whose proof is rather straightforward, provides a characterisation of the precompact subsets of
\((\mathscr P_\sim(\mathbb U),W_H)\).
\begin{theorem}\label{thm:precpt_W_H}
Let \(\mathcal K\subseteq\mathscr P_\sim(\mathbb U)\) be given. Then the following conditions are equivalent:
\begin{itemize}
\item[\(\rm i)\)] The set \(\mathcal K\) is \(W_H\)-precompact (or, equivalently, the set \(\Phi^{-1}(\mathcal K)\)
is \(\sfd_{\rm mG}\)-precompact).
\item[\(\rm ii)\)] Given any \(\varepsilon>0\), we can find points \(x_1,\ldots,x_N\in\mathbb U\) with
\({\rm diam}\{x_1,\ldots,x_N\}\leq N\) having the property that for any \(\theta\in\mathcal K\) there exists \(\mu\in\theta\)
such that \(\mu(\bigcup_{i=1}^N B_\varepsilon(x_i))>1-\varepsilon\).
\item[\(\rm iii)\)] Given any \(\varepsilon>0\), there exists \(N\in\N\) such that for any
\(\mu\in\mathscr P(\mathbb U)\) with \([\mu]\in\mathcal K\) we can find points
\(x^\mu_1,\ldots,x^\mu_N\in{\rm spt}(\mu)\) such that \(\mu(\bigcup_{i=1}^N B_\varepsilon(x^\mu_i))>1-\varepsilon\)
and \({\rm diam}\{x^\mu_1,\ldots,x^\mu_N\}\leq N\).
\end{itemize}
\end{theorem}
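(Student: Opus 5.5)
I would prove the cycle of implications i)\(\Rightarrow\)iii)\(\Rightarrow\)ii)\(\Rightarrow\)i). Throughout I use that \(W_H={\rm dist}_W\) (Theorem \ref{thm:equiv_W_H}) and that \({\rm Iso}(\mathbb U)\) acts transitively enough (ultrahomogeneity) to move any finite configuration onto any isometric copy of it.

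\textbf{i)\(\Rightarrow\)iii).} Fix \(\varepsilon\in(0,1)\) and a finite \(\delta\)-net \(\theta_1,\ldots,\theta_M\) of \(\mathcal K\) for \(W_H\), with \(\delta\) small to be chosen (e.g.\ \(\delta=\varepsilon^2/4\)). For each \(j\) pick \(\mu_j\in\theta_j\) and a finite set \(S_j\subseteq{\rm spt}(\mu_j)\) with \(\mu_j(\bigcup_{x\in S_j}B_{\varepsilon/4}(x))>1-\varepsilon/4\); this is possible by tightness and compactness. Replacing \(\mu_j\) by the finitely supported measure from Lemma \ref{lem:basic_prop_W} ii)–iii) (restrict to the union of balls, then collapse onto \(S_j\)) costs only \(O(\varepsilon)\) in \(W\). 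Given \(\mu\) with \([\mu]\in\mathcal K\), choose \(j\) with \(W_H([\mu],\theta_j)<\delta\); since \(W_H={\rm dist}_W\) and orbits are dense in classes, there is \(\varphi\in{\rm Iso}(\mathbb U)\) with \(W(\mu,\varphi_\#\nu_j)\) small, where \(\nu_j\) is the discrete approximant. Lemma \ref{lem:basic_prop_W} i) then gives \(\mu(\bigcup_{x\in\varphi(S_j)}B_{\varepsilon/2}(x))\geq1-\varepsilon/2\). The centres \(\varphi(x)\) need not lie in \({\rm spt}(\mu)\), so I discard the balls of \(\mu\)-measure zero and replace each remaining centre by a point of \({\rm spt}(\mu)\) inside \(B_{\varepsilon/2}(\varphi(x))\); the balls of radius \(\varepsilon\) around these new points cover. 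Choosing \(N\geq\max_j\#S_j\) and \(N\geq\max_j{\rm diam}(S_j)+2\) (padding with repeated points) gives iii).

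\textbf{iii)\(\Rightarrow\)ii).} This is the step where ultrahomogeneity and universality enter, and I expect it to be the main obstacle. For each \(\mu\) we have an \(N\)-point configuration in \(\mathbb U\) of diameter \(\leq N\), but these configurations vary with \(\mu\). Discretize: the distance matrices \((\sfd_{\mathbb U}(x_i^\mu,x_k^\mu))_{i,k}\) lie in the compact set \([0,N]^{N\times N}\), so finitely many \(\varepsilon\)-close "model" finite metric spaces suffice. The key point is that a single finite set can simultaneously host near-copies of all of them. Concretely, embed the disjoint union of the models, with all cross distances set to \(N\), into \(\mathbb U\) as \(\{y_1,\ldots,y_{N'}\}\), of diameter \(\leq N\leq N'\). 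Given \(\mu\), with configuration \((x_i^\mu)\) \(\varepsilon\)-close in distances to a model copy \((y_{k_i})\), I need \(\varphi\in{\rm Iso}(\mathbb U)\) with \(\sfd_{\mathbb U}(\varphi(x_i^\mu),y_{k_i})\leq C\varepsilon\). I would obtain this by a standard amalgamation in \(\mathbb U\): the finite metric space formed by the two configurations with \(\sfd(x_i,y_{k_i})=\varepsilon\) and the obvious triangle-compatible cross distances is a valid metric, so by universality and ultrahomogeneity (extension property) there is a point set realising it, and then a global isometry. Then \(\varphi_\#\mu\in[\mu]\) satisfies the covering condition with radius \((1+C)\varepsilon\).

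\textbf{ii)\(\Rightarrow\)i).} Given \(\varepsilon\), with points \(x_1,\ldots,x_N\), each \(\theta\in\mathcal K\) has a representative \(\mu\) concentrated up to \(\varepsilon\) on \(\bigcup_iB_\varepsilon(x_i)\). By Lemma \ref{lem:basic_prop_W} ii)–iii), \(W(\mu,\sum_i\alpha_i\delta_{x_i})\leq 3\varepsilon\) for some probability vector \(\alpha\). By Lemma \ref{lem:basic_prop_W} iv), a finite \(\varepsilon/N\)-net of the simplex yields finitely many measures forming a \(4\varepsilon\)-net of \(\mathcal K\) in \(W_H\). Hence \(\mathcal K\) is totally bounded, i.e.\ precompact, since \((\mathscr P_\sim(\mathbb U),W_H)\) is complete.
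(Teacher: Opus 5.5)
Your argument is correct, but it splits the work differently from the paper, which proves i)$\Rightarrow$ii)$\Rightarrow$iii)$\Rightarrow$i).

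\textbf{How the paper proceeds.} For i)$\Rightarrow$ii) it takes an $\varepsilon^2$-net of classes represented by measures on one common finite set and applies Lemma \ref{lem:basic_prop_W} i). The step ii)$\Rightarrow$iii) is called straightforward; it is exactly your move of passing to an arbitrary representative and pushing the centres into the support, so your i)$\Rightarrow$iii) merges the paper's first two steps. All the geometry sits in iii)$\Rightarrow$i). There the paper encodes each configuration by its distance matrix and weights, and takes a finite net in the resulting compact parameter box. It then places \emph{all} configurations in a single copy of $(\R^N,\sfd_{\rm sup})\subseteq\mathbb U$ via the Kuratowski-type map $x\mapsto(\sfd_{\mathbb U}(x,x_j^\theta)-\sfd_{\mathbb U}(x_1^\theta,x_j^\theta))_j$. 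This map moves points by at most twice the change in the distance matrix. Ultrahomogeneity upgrades these maps to elements of ${\rm Iso}(\mathbb U)$, and weights and distances are handled together through Lemma \ref{lem:basic_prop_W} iv).

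\textbf{How you proceed.} You put the geometry into iii)$\Rightarrow$ii). You align each configuration with a nearby model by gluing along the correspondence, which is the same $\frac{\varepsilon}{2}+\min_l[\cdots]$ device the paper uses in the proof of Theorem \ref{thm:box_equiv_mG}. You then use the extension property of $\mathbb U$ (universality plus ultrahomogeneity). After that, ii)$\Rightarrow$i) is a plain discretisation on one fixed finite set, needing only $W_H([\mu],[\nu])\leq W(\mu,\nu)$ and completeness from Theorem \ref{thm:equiv_W_H}.

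\textbf{Comparison.} The paper's Kuratowski trick is more explicit and avoids any amalgamation. Your route is more conceptual and shows clearly that the diameter bound in iii) only serves to make the set of distance matrices compact; in ii) it is vacuous, since you can pad with repeated points. Your disjoint union with cross distances $N$ is unnecessary: the model configurations already lie in $\mathbb U$, so you can take their union directly.

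\textbf{A quantitative slip in i)$\Rightarrow$iii).} Discretising $\mu_j$ at cost $O(\varepsilon)$ in $W$ is too crude. Lemma \ref{lem:basic_prop_W} i) at scale $\varepsilon/2$ requires $W(\mu,\varphi_\#\nu_j)<\varepsilon^2/4$, and with your choices you only get $W(\mu,\varphi_\#\nu_j)\approx\delta+\varepsilon/2$. The fix is to take, say, $\delta=\varepsilon^2/16$ and finitely supported $\nu_j$ with $W(\mu_j,\nu_j)<\varepsilon^2/16$. You can get such $\nu_j$ from Lemma \ref{lem:basic_prop_W} ii)--iii) with radius and mass defect of order $\varepsilon^2$. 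With that change the step goes through as you describe.
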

\begin{proof}
\ \\
\(\boldsymbol{{\rm i)}\Longrightarrow{\rm ii)}.}\) Assume \(\mathcal K\) is \(W_H\)-precompact and fix \(\varepsilon\in(0,1)\).
Then we find an \(\varepsilon^2\)-net \(\{[\mu_1],\ldots,[\mu_k]\}\) for \(\mathcal K\) in \((\mathscr P_\sim(\mathbb U),W_H)\).
Since finitely-supported probability measures are weakly dense in \(\mathscr P(\mathbb U)\) and \(W_H([\mu],[\nu])\leq W(\mu,\nu)\),
we can assume that for some finite set \(F=\{x_1,\ldots,x_N\}\subseteq\mathbb U\) it holds that \(\mu_j\) is concentrated on \(F\)
for every \(j=1,\ldots,k\). Without loss of generality, we can also assume that \({\rm diam}\{x_1,\ldots,x_N\}\leq N\). Now, fix
\(\theta\in\mathcal K\). There exists \(j=1,\ldots,k\) such that \(W_H(\theta,[\mu_j])<\varepsilon^2\), thus
\(W(\mu,\mu_j)<\varepsilon^2\) for some \(\mu\in\theta\). By Lemma \ref{lem:basic_prop_W} i), we conclude that
\(\mu(\bigcup_{i=1}^N B_\varepsilon(x_i))\geq 1-\varepsilon\).
\smallskip

\noindent\(\boldsymbol{{\rm ii)}\Longrightarrow{\rm iii)}.}\) Straightforward.
\smallskip

\noindent\(\boldsymbol{{\rm iii)}\Longrightarrow{\rm i)}.}\) Assume iii) holds and fix \(\varepsilon>0\). Pick any
\(\delta\in(0,1)\) so that \(7\delta<\varepsilon\). For any \(\theta\in\mathcal K\), take \(\mu_\theta\in\theta\)
and \(x_1^\theta,\ldots,x_N^\theta\in{\rm spt}(\mu_\theta)\) such that
\(\mu_\theta(\bigcup_{i=1}^N B_\delta(x_i^\theta))\geq 1-\delta\) and \({\rm diam}\{x_1^\theta,\ldots,x_N^\theta\}\leq N\), for some \(N\in\N\)
that is independent of \(\theta\). Letting \(D_i^\theta\coloneqq B_\delta(x_i^\theta)\setminus\bigcup_{j<i}B_\delta(x_j^\theta)\), we define
\[
\hat\mu_\theta\coloneqq\sum_{i=1}^N\beta_i(\theta)\delta_{x_i^\theta}\in\mathscr P(\mathbb U)\quad\text{ for every }
\theta\in\mathcal K\text{, where }\beta_i(\theta)\coloneqq\frac{\mu_\theta(D_i^\theta)}{\mu_\theta(\bigcup_{j=1}^N B_\delta(x_j^\theta)).}
\]
Lemma \ref{lem:basic_prop_W} ii) and iii) yield \(W(\hat\mu_\theta,\mu_\theta)\leq 2\delta\).
Define \(\alpha_{i,j}(\theta)\coloneqq\sfd_{\mathbb U}(x_i^\theta,x_j^\theta)\) for every \(1\leq i<j\leq N\) and \(\theta\in\mathcal K\).
We then define the map \(\psi\colon\mathcal K\to[0,N]^{N(N-1)/2}\times[0,1/(1-\delta)]^N\) as
\[
\psi(\theta)\coloneqq(\alpha_{1,2}(\theta),\alpha_{1,3}(\theta),\ldots,\alpha_{N-1,N}(\theta),\beta_1(\theta),\ldots,\beta_N(\theta))
\quad\text{ for every }\theta\in\mathcal K.
\]
Take \(\theta_1,\ldots,\theta_k\in\mathcal K\) so that \(\{\psi(\theta_1),\ldots,\psi(\theta_k)\}\) is a
\(\frac{\delta}{N}\)-net in \((\R^{N(N+1)/2},\sfd_{\rm sup})\) for \(\psi(\mathcal K)\), where \(\sfd_{\rm sup}\)
is the supremum distance. Fix an isometry \(f\colon(\R^N,\sfd_{\rm sup})\to\mathbb U\). For any
\(\theta\in\mathcal K\), we define
\[
\iota_\theta(x)\coloneqq f(\sfd_{\mathbb U}(x,x_1^\theta)-\sfd_{\mathbb U}(x^\theta_1,x_1^\theta),
\ldots,\sfd_{\mathbb U}(x,x_N^\theta)-\sfd_{\mathbb U}(x^\theta_1,x_N^\theta))\in\mathbb U\quad
\text{ for every }x\in\{x_1^\theta,\ldots,x_N^\theta\}.
\]
Note that the resulting map \(\iota_\theta\colon(\{x_1^\theta,\ldots,x_N^\theta\},\sfd_{\mathbb U})\to\mathbb U\) is an isometry.
By the ultrahomogeneity of \(\mathbb U\), we can extend each \(\iota_\theta\) to a map \(\varphi^\theta\in{\rm Iso}(\mathbb U)\).
Now, fix any \(\theta\in\mathcal K\) and choose \(\ell=1,\ldots,k\) for which \(\sfd_{\rm sup}(\psi(\theta),\psi(\theta_\ell))<\delta\).
Observe that for any \(i=1,\ldots,N\) we have \(|\beta_i(\theta)-\beta_i(\theta_\ell)|<\frac{\delta}{N}\) and
\[
\sfd_{\mathbb U}(\varphi^\theta(x_i^\theta),\varphi^{\theta_\ell}(x_i^{\theta_\ell}))
=\max_{1\leq j\leq N}|\alpha_{i,j}(\theta)-\alpha_{1,j}(\theta)-(\alpha_{i,j}(\theta_\ell)-\alpha_{1,j}(\theta_\ell))|
\leq 2\,\sfd_{\rm sup}(\psi(\theta),\psi(\theta_\ell))<2\delta.
\]
Applying Lemma \ref{lem:basic_prop_W} iv) to \(\varphi^\theta_\#\hat\mu_\theta\) and \(\varphi^{\theta_\ell}_\#\hat\mu_{\theta_\ell}\),
we thus obtain \(W(\varphi^\theta_\#\hat\mu_\theta,\varphi^{\theta_\ell}_\#\hat\mu_{\theta_\ell})\leq 3\delta\). Therefore,
\[\begin{split}
W_H(\theta,\theta_\ell)&\leq W(\varphi^\theta_\#\mu_\theta,\varphi^{\theta_\ell}_\#\mu_{\theta_\ell})
\leq W(\varphi^\theta_\#\mu_\theta,\varphi^\theta_\#\hat\mu_\theta)
+W(\varphi^\theta_\#\hat\mu_\theta,\varphi^{\theta_\ell}_\#\hat\mu_{\theta_\ell})
+W(\varphi^{\theta_\ell}_\#\hat\mu_{\theta_\ell},\varphi^{\theta_\ell}_\#\mu_{\theta_\ell})\\
&=W(\mu_\theta,\hat\mu_\theta)+W(\varphi^\theta_\#\hat\mu_\theta,\varphi^{\theta_\ell}_\#\hat\mu_{\theta_\ell})+W(\hat\mu_{\theta_\ell},\mu_{\theta_\ell})
\leq 7\delta<\varepsilon.
\end{split}\]
This proves that \(\{\theta_1,\ldots,\theta_k\}\) is an \(\varepsilon\)-net for \(\mathcal K\) in \((\mathscr P_\sim(\mathbb U),W_H)\),
so that \(\mathcal K\) is \(W_H\)-precompact.
\end{proof}
\section{Comparison with the box topology}
Given metric measure spaces \((\X,\sfd_\X,\mm_\X)\) and \((\Y,\sfd_\Y,\mm_\Y)\), we denote by
\({\rm Cpl}(\sfd_\X,\mm_\X;\sfd_\Y,\mm_\Y)\) the collection of all those complete and separable distances
\(\varrho\) on \({\rm spt}(\mm_\X)\sqcup{
\rm spt}(\mm_\Y)\) that are \emph{couplings} of \((\sfd_\X,\mm_\X)\) and \((\sfd_\Y,\mm_\Y)\),
i.e.\ that satisfy \(\varrho(x,\tilde x)=\sfd_\X(x,\tilde x)\) for all \(x,\tilde x\in{\rm spt}(\mm_\X)\)
and \(\varrho(y,\tilde y)=\sfd_\Y(y,\tilde y)\) for all \(y,\tilde y\in{\rm spt}(\mm_\Y)\).
We define the truncated version of Sturm's \(L_1\)-transportation distance \cite{Sturm2006a} as
\begin{equation}\label{eq:def_D}
{\rm D}(\mathcal X,\mathcal Y)\coloneqq\inf\bigg\{\int\varrho\wedge 1\,\d\ppi\;\big|\;
\varrho\in{\rm Cpl}(\sfd_\X,\mm_\X;\sfd_\Y,\mm_\Y),\,\pi\in{\rm Adm}(\mm_\X;\mm_\Y)\bigg\}
\end{equation}
for every \(\mathcal X=[\X,\sfd_\X,\mm_\X]\in\mathbb X_1\) and \(\mathcal Y=[\Y,\sfd_\Y,\mm_\Y]\in\mathbb X_1\).
Note that \({\rm D}(\mathcal X,\mathcal Y)\) is well defined, since that the right-hand side of
\eqref{eq:def_D} does not depend on the chosen representatives of \(\mathcal X\) and \(\mathcal Y\).
\begin{proposition}\label{prop:D=d_mG}
It holds that \({\rm D}(\mathcal X,\mathcal Y)=\sfd_{\rm mG}(\mathcal X,\mathcal Y)\) for every \(\mathcal X,\mathcal Y\in\mathbb X_1\).
\end{proposition}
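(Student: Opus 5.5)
We prove the two inequalities separately, using Theorem \ref{thm:equiv_W_H} in the form \(\sfd_{\rm mG}(\mathcal X,\mathcal Y)={\rm dist}_W(\Phi(\mathcal X),\Phi(\mathcal Y))\). In both directions we pass between a coupling distance \(\varrho\) on the disjoint union of the supports and a pair of measures in \(\mathbb U\).

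\textbf{Inequality \({\rm D}\leq\sfd_{\rm mG}\).} Fix \(\mu\in\Phi(\mathcal X)\), \(\nu\in\Phi(\mathcal Y)\) and \(\pi\in{\rm Adm}(\mu;\nu)\). By Remark \ref{rmk:suff_cond_isom_surj}, we may take as representatives \(\X={\rm spt}(\mu)\) and \(\Y={\rm spt}(\nu)\), both equipped with \(\sfd_{\mathbb U}\). The natural candidate coupling is \(\varrho\coloneqq\sfd_{\mathbb U}\) on \({\rm spt}(\mu)\sqcup{\rm spt}(\nu)\).

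The only problem is that the two supports may intersect, in which case \(\sfd_{\mathbb U}\) is only a pseudodistance on the disjoint union. To fix this, we define for \(\eta>0\)
\[
\varrho_\eta(x,y)\coloneqq\sfd_{\mathbb U}(x,y)+\eta
\]
for \(x\in{\rm spt}(\mu)\) and \(y\in{\rm spt}(\nu)\), and keep the original distances inside each piece. The triangle inequality holds because the constant \(\eta\) is added to every cross term, and any path between the two pieces uses an odd number of cross terms. Completeness and separability follow from those of the two pieces, since cross distances are at least \(\eta\).

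With this coupling we get \(\int\varrho_\eta\wedge1\,\d\pi\leq\int\sfd_{\mathbb U}\wedge1\,\d\pi+\eta\). Letting \(\eta\to0\), then taking the infimum over \(\pi\), \(\mu\) and \(\nu\), gives \({\rm D}\leq{\rm dist}_W=\sfd_{\rm mG}\).

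\textbf{Inequality \(\sfd_{\rm mG}\leq{\rm D}\).} Fix \(\varrho\in{\rm Cpl}\) and \(\pi\in{\rm Adm}(\mm_\X;\mm_\Y)\). The space \(Z\coloneqq({\rm spt}(\mm_\X)\sqcup{\rm spt}(\mm_\Y),\varrho)\) is complete and separable, so universality gives an isometric embedding \(\iota\colon Z\to\mathbb U\). Its restrictions to the two pieces are isometries, so \(\mu\coloneqq\iota_\#\mm_\X\in\Phi(\mathcal X)\) and \(\nu\coloneqq\iota_\#\mm_\Y\in\Phi(\mathcal Y)\). Moreover \((\iota\times\iota)_\#\pi\in{\rm Adm}(\mu;\nu)\), and
\[
\int\sfd_{\mathbb U}\wedge1\,\d(\iota\times\iota)_\#\pi=\int\varrho\wedge1\,\d\pi .
\]
Hence \({\rm dist}_W(\Phi(\mathcal X),\Phi(\mathcal Y))\leq\int\varrho\wedge1\,\d\pi\), and taking the infimum yields \(\sfd_{\rm mG}\leq{\rm D}\). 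This is essentially the same argument as in Remark \ref{rmk:GW-dmG}.

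The only delicate point is the overlap issue in the first direction, and the \(\eta\)-perturbation handles it.
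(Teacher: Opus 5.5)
Your proof is correct and follows the paper's argument. For \(\sfd_{\rm mG}\leq{\rm D}\) you embed \(({\rm spt}(\mm_\X)\sqcup{\rm spt}(\mm_\Y),\varrho)\) into \(\mathbb U\) by universality and push \(\pi\) forward. For \({\rm D}\leq\sfd_{\rm mG}\) you use the same coupling as the paper, adding a small constant to \(\sfd_{\mathbb U}\) across the two copies, together with \(W_H={\rm dist}_W\) from Theorem \ref{thm:equiv_W_H}. You define that coupling directly on \({\rm spt}(\mu)\sqcup{\rm spt}(\nu)\) and check the triangle inequality, completeness and separability, which is slightly more careful than the paper's version on \(\mathbb U\sqcup\mathbb U\).
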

\begin{proof}
Fix any \(\mathcal X=[\X,\sfd_\X,\mm_\X]\in\mathbb X_1\) and \(\mathcal Y=[\Y,\sfd_\Y,\mm_\Y]\in\mathbb X_1\).
Let \(\varrho\in{\rm Cpl}(\sfd_\X,\mm_\X;\sfd_\Y,\mm_\Y)\) and \(\pi\in{\rm Adm}(\mm_\X;\mm_\Y)\) be given.
Fix some isometry \(\iota\colon(\X\sqcup\Y,\varrho)\to(\mathbb U,\sfd_{\mathbb U})\). Let
\(i_\X\colon\X\to\X\sqcup\Y\) and \(i_\Y\colon\Y\to\X\sqcup\Y\) be the inclusion maps. Define
\(\bar\pi\coloneqq(\iota\circ i_\X,\iota\circ i_\Y)_\#\pi\in{\rm Adm}((\iota\circ i_\X)_\#\mm_\X;(\iota\circ i_\Y)_\#\mm_\Y)\).
Since \((\iota\circ i_\X)_\#\mm_\X\in\Phi(\mathcal X)\) and \((\iota\circ i_\Y)_\#\mm_\Y\in\Phi(\mathcal Y)\),
we deduce that
\[
\sfd_{\rm mG}(\mathcal X,\mathcal Y)\leq W((\iota\circ i_\X)_\#\mm_\X,(\iota\circ i_\Y)_\#\mm_\Y)
\leq\int\sfd_{\mathbb U}\wedge 1\,\d\bar\pi=\int\varrho\wedge 1\,\d\pi.
\]
Thanks to the arbitrariness of \(\varrho\) and \(\pi\), we can conclude that \(\sfd_{\rm mG}(\mathcal X,\mathcal Y)\leq{\rm D}(\mathcal X,\mathcal Y)\).

To prove the converse inequality, fix \(\varepsilon>0\). We can find \(\mu\in\Phi(\mathcal X)\),
\(\nu\in\Phi(\mathcal Y)\) and \(\pi\in{\rm Adm}(\mu;\nu)\) such that
\(\int\sfd_{\mathbb U}\wedge 1\,\d\pi<\sfd_{\rm mG}(\mathcal X,\mathcal Y)+\varepsilon\).
We define the distance \(\varrho_\varepsilon\) on \(\mathbb U\sqcup\mathbb U\) as
\[
\varrho_\varepsilon(x,y)\coloneqq\left\{\begin{array}{ll}
\sfd_{\mathbb U}(x,y)\\
\varepsilon+\sfd_{\mathbb U}(x,y)\\
\end{array}\quad\begin{array}{ll}
\text{ if }x\text{ and }y\text{ belong to the same copy of }\mathbb U\text{ in }\mathbb U\sqcup\mathbb U,\\
\text{ otherwise.}
\end{array}\right.
\]
Then we have that \(\varrho_\varepsilon\in{\rm Cpl}(\sfd_{\mathbb U},\mu;\sfd_{\mathbb U},\nu)\), so that accordingly
\[
{\rm D}(\mathcal X,\mathcal Y)\leq\int\varrho_\varepsilon\wedge 1\,\d\pi\leq
\varepsilon+\int\sfd_{\mathbb U}\wedge 1\,\d\pi<\sfd_{\rm mG}(\mathcal X,\mathcal Y)+2\varepsilon.
\]
Letting \(\varepsilon\searrow 0\), we conclude that
\({\rm D}(\mathcal X,\mathcal Y)\leq\sfd_{\rm mG}(\mathcal X,\mathcal Y)\).
Therefore, the statement is achieved.
\end{proof}
\begin{theorem}\label{thm:box_equiv_mG}
It holds that
\begin{equation}\label{eq:equiv_box_dist}
\frac{1}{2}\,\square(\mathcal X,\mathcal Y)^2\leq\sfd_{\rm mG}(\mathcal X,\mathcal Y)\leq\frac{3}{2}\square(\mathcal X,\mathcal Y)
\quad\text{ for every }\mathcal X,\mathcal Y\in\mathbb X_1.
\end{equation}
\end{theorem}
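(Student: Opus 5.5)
We prove the two inequalities separately. For the upper bound \(\sfd_{\rm mG}\leq\frac32\square\) we use Proposition \ref{prop:D=d_mG}, which identifies \(\sfd_{\rm mG}\) with \({\rm D}\). It then suffices to build a coupling distance and a transport plan from a box competitor. For the lower bound \(\frac12\square^2\leq\sfd_{\rm mG}\) we start from an almost optimal plan \(\pi\in{\rm Adm}(\mu;\nu)\) with \(\mu\in\Phi(\mathcal X)\), \(\nu\in\Phi(\mathcal Y)\). We then parametrise \(\pi\) itself, which gives parameters of \(\mu\) and \(\nu\) that are "close".

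\emph{Upper bound.} Fix \(\varepsilon>\square(\mathcal X,\mathcal Y)\) with \(\varepsilon\leq 1\) (if \(\square=1\) the bound is trivial, since \(\sfd_{\rm mG}\leq1\)). Take parameters \(q_\X,q_\Y\) and a Borel set \(I\subseteq[0,1)\) with \(\mathscr L_1(I)\geq1-\varepsilon\) and \(|\sfd_\X(q_\X(t),q_\X(s))-\sfd_\Y(q_\Y(t),q_\Y(s))|\leq\varepsilon\) for \(t,s\in I\). Define \(\varrho\) on \(\X\sqcup\Y\) (supports) by
\[
\varrho(x,y)\coloneqq\inf_{t\in I}\big(\sfd_\X(x,q_\X(t))+\sfd_\Y(q_\Y(t),y)\big)+\tfrac{\varepsilon}{2},
\]
with \(\varrho\) equal to \(\sfd_\X\) and \(\sfd_\Y\) within each piece. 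The standard check, using the \(\varepsilon\)-distortion of the correspondence \(\{(q_\X(t),q_\Y(t)):t\in I\}\), shows that \(\varrho\) satisfies the triangle inequality. For example, \(\sfd_\X(x,x')\leq\varrho(x,y)+\varrho(y,x')\) follows from \(\sfd_\X(q_\X(t),q_\X(s))\leq\sfd_\Y(q_\Y(t),q_\Y(s))+\varepsilon\). Positivity across the pieces holds because of the \(+\varepsilon/2\). We also need \(\varrho\) to be complete and separable: separability is clear, and completeness can be arranged by passing to the completion, or holds automatically since each piece is closed with positive gap. As the plan take \(\pi\coloneqq(q_\X,q_\Y)_\#\mathscr L_1\in{\rm Adm}(\mm_\X;\mm_\Y)\). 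On \(I\) we have \(\varrho(q_\X(t),q_\Y(t))\leq\varepsilon/2\), and off \(I\) we use \(\varrho\wedge1\leq1\). Hence
\[
\int\varrho\wedge1\,\d\pi\leq\tfrac{\varepsilon}{2}+\varepsilon.
\]
Letting \(\varepsilon\searrow\square\) gives the bound. One caveat is that \(q_\X(t)\) need not lie in \({\rm spt}(\mm_\X)\) for all \(t\). We remove the null set of such \(t\) from \(I\) first.

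\emph{Lower bound.} Let \(\delta\coloneqq\sfd_{\rm mG}(\mathcal X,\mathcal Y)\) and fix \(\eta>0\). By Theorem \ref{thm:equiv_W_H} (\(W_H={\rm dist}_W\)) we can choose \(\mu,\nu\) and \(\pi\in{\rm Adm}(\mu;\nu)\) with \(\int\sfd_{\mathbb U}\wedge1\,\d\pi<\delta+\eta\). Take a parameter \(p=(p_1,p_2)\colon[0,1)\to\mathbb U\times\mathbb U\) of \(\pi\), which exists by \cite[Lemma 4.2]{Shioya16}. Then \(q_\X\coloneqq p_1\) and \(q_\Y\coloneqq p_2\) are parameters of \(\mu\) and \(\nu\). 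Since the box distance is invariant under isomorphism, these may be used for \(\mathcal X,\mathcal Y\), after modifying them on null sets so that they take values in the supports. Let \(\lambda\in(0,1]\) and set \(I\coloneqq\{t:\sfd_{\mathbb U}(p_1(t),p_2(t))\wedge 1<\lambda\}\). By Chebyshev, \(\mathscr L_1([0,1)\setminus I)\leq(\delta+\eta)/\lambda\). For \(t,s\in I\) the triangle inequality gives
\[
|\sfd_{\mathbb U}(p_1(t),p_1(s))-\sfd_{\mathbb U}(p_2(t),p_2(s))|<2\lambda,
\]
using that \(\lambda\leq1\) forces the untruncated distances to be below \(\lambda\). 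Thus \(\square\leq\max\{2\lambda,(\delta+\eta)/\lambda\}\). Choosing \(\lambda=\sqrt{(\delta+\eta)/2}\), which is at most \(1\) when \(\delta+\eta\leq 2\), yields \(\square\leq\sqrt{2(\delta+\eta)}\), that is \(\frac12\square^2\leq\delta+\eta\). Then let \(\eta\to0\).

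\emph{Main obstacle.} The delicate step is the upper bound: verifying that \(\varrho\) is a genuine complete separable coupling distance. This means checking all mixed triangle inequalities using only the \(\varepsilon\)-distortion on \(I\), and handling the measure-zero issues with supports. The lower bound is a routine Chebyshev argument.
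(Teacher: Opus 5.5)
Your proposal is correct and follows the paper's proof essentially step by step. For the lower bound you parametrise a near-optimal plan \(\pi\in{\rm Adm}(\mu;\nu)\) and apply Chebyshev with threshold \(\sqrt{(\delta+\eta)/2}\); for the upper bound you use \({\rm D}=\sfd_{\rm mG}\) with the same coupling distance \(\frac{\varepsilon}{2}+\inf_{s\in I}[\ldots]\) and the plan \((q_\X,q_\Y)_\#\mathscr L_1\). Two cosmetic points: take \(\varepsilon<1\) so that \(I\) is non-empty and \(\varrho\) is finite, and rely on the \(\varepsilon/2\)-gap for completeness rather than on passing to a completion, which would change the underlying set \({\rm spt}(\mm_\X)\sqcup{\rm spt}(\mm_\Y)\).
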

\begin{proof}
Fix any \(\mathcal X,\mathcal Y\in\mathbb X\) and \(\varepsilon\in(0,1]\) such that
\(\sfd_{\rm mG}(\mathcal X,\mathcal Y)<\varepsilon\). Then we have \(W(\mu,\nu)<\varepsilon\)
for some \(\mu\in\Phi(\mathcal X)\) and \(\nu\in\Phi(\mathcal Y)\), so that
\(\int\sfd_{\mathbb U}\wedge 1\,\d\pi<\varepsilon\) for some \(\pi\in{\rm Adm}(\mu;\nu)\).
Fix a parameter \(Q=(q_1,q_2)\colon[0,1)\to\mathbb U\times\mathbb U\) of \(\pi\), and let
\(I_\varepsilon\coloneqq Q^{-1}(\{(x,y)\in\mathbb U\times\mathbb U:\sfd_{\mathbb U}(x,y)<\sqrt{\varepsilon/2}\})\).
Then \(\mathscr L_1([0,1)\setminus I_\varepsilon)=\pi(\{(x,y)\in\mathbb U\times\mathbb U:\sfd_{\mathbb U}(x,y)\wedge 1\geq\sqrt{\varepsilon/2}\}\})\leq\sqrt{2\varepsilon}\)
by Chebyshev's inequality, and \(\sfd_{\mathbb U}(q_1(t),q_2(t))<\sqrt{\varepsilon/2}\) for every
\(t\in I_\varepsilon\). In particular, we can estimate
\[
|\sfd_{\mathbb U}(q_1(t),q_1(s))-\sfd_{\mathbb U}(q_2(t),q_2(s))|\leq
\sfd_{\mathbb U}(q_1(t),q_2(t))+\sfd_{\mathbb U}(q_1(s),q_2(s))<\sqrt{2\varepsilon}
\quad\text{ for all }t,s\in I_\varepsilon.
\]
Since \(q_1\) and \(q_2\) are parameters of \(\mu\) and \(\nu\), respectively, we deduce that
\(\square(\mathcal X,\mathcal Y)\leq\sqrt{2\varepsilon}\). Letting \(\varepsilon\searrow\sfd_{\rm mG}(\mathcal X,\mathcal Y)\),
we conclude that \(\square(\mathcal X,\mathcal Y)\leq\sqrt{2\,\sfd_{\rm mG}(\mathcal X,\mathcal Y)}\),
namely the first inequality in \eqref{eq:equiv_box_dist}.

Next, fix any \(\mathcal X,\mathcal Y\in\mathbb X\) and \(\varepsilon\in(0,1]\) such that
\(\square(\mathcal X,\mathcal Y)<\varepsilon\). Take \(\mu\in\Phi(\mathcal X)\) and
\(\nu\in\Phi(\mathcal Y)\). Then there exist parameters \(q_1\) and \(q_2\) of \(\mu\) and \(\nu\), respectively,
and a Borel set \(I_\varepsilon\subseteq[0,1)\) such that \(\mathscr L_1(I_\varepsilon)>1-\varepsilon\) and
\(|\sfd_{\mathbb U}(q_1(t),q_1(s))-\sfd_{\mathbb U}(q_2(t),q_2(s))|<\varepsilon\) for every \(t,s\in I_\varepsilon\).
The preceding condition on $I_\varepsilon$ ensures the triangle inequality, allowing us to equip the set \(V\coloneqq{\rm spt}(\mu)\sqcup{\rm spt}(\nu)\) with the complete and
separable distance \(\varrho\), which we define as
\[
\varrho(x,y)\coloneqq\left\{\begin{array}{lll}
\sfd_{\mathbb U}(x,y)\\
\frac{\varepsilon}{2}+\inf_{s\in I_\varepsilon}[\sfd_{\mathbb U}(x,q_1(s))+\sfd_{\mathbb U}(y,q_2(s))]\\
\frac{\varepsilon}{2}+\inf_{s\in I_\varepsilon}[\sfd_{\mathbb U}(x,q_2(s))+\sfd_{\mathbb U}(y,q_1(s))]
\end{array}\quad\begin{array}{lll}
\text{ if }x,y\in{\rm spt}(\mu)\text{ or }x,y\in{\rm spt}(\nu),\\
\text{ if }x\in{\rm spt}(\mu)\text{ and }y\in{\rm spt}(\nu),\\
\text{ if }x\in{\rm spt}(\nu)\text{ and }y\in{\rm spt}(\mu).
\end{array}\right.
\]
Applying Proposition \ref{prop:D=d_mG} and using the fact \((q_1,q_2)_\#\mathscr L_1\in{\rm Adm}(\mu;\nu)\),
we deduce that
\[\begin{split}
\sfd_{\rm mG}(\mathcal X,\mathcal Y)&={\rm D}(\mathcal X,\mathcal Y)\leq
\int\varrho\wedge 1\,\d(q_1,q_2)_\#\mathscr L_1
\leq\mathscr L_1([0,1)\setminus I_\varepsilon)+\int_{I_\varepsilon}\varrho(q_1(t),q_2(t))\wedge 1\,\d\mathscr L_1(t)\\
&<\varepsilon+\frac{\varepsilon}{2}\mathscr L_1(I_\varepsilon)<\frac{3}{2}\varepsilon,
\end{split}\]
where we used that fact that \(\varrho(q_1(t),q_2(t))=\frac{\varepsilon}{2}\) for all \(t\in I_\varepsilon\).
Letting \(\varepsilon\searrow\square(\mathcal X,\mathcal Y)\), we conclude that
\(\sfd_{\rm mG}(\mathcal X,\mathcal Y)\leq\frac{3}{2}\,\square(\mathcal X,\mathcal Y)\),
namely the second inequality in \eqref{eq:equiv_box_dist}. This completes the proof.
\end{proof}
\begin{remark}{\rm
Theorem \ref{thm:box_equiv_mG} implies that the mG-topology coincides with the box topology;
this follows also from the fact that the Gromov--Prokhorov metric and the box distance are Lipschitz equivalent,
as it was proven in \cite{Lohr2013}.
The topological properties of \((\mathbb X_1,\square)\) have been investigated in the works
\cite{KNSh2024,KazukawaNakajimaShioya24}. We expect that the H\"{o}lder equivalence between \(\square\)
and \(\sfd_{\rm mG}\) established in Theorem \ref{thm:box_equiv_mG} may prove useful for further
understanding the properties of \(\square\), although we do not pursue this research direction
in the present work.
\fr}\end{remark}

Combining Theorems \ref{thm:precpt_W_H} and \ref{thm:box_equiv_mG}, we recover the \(\square\)-compactness
criterion \cite[Lemma 4.28]{Shioya16}:
\begin{corollary}[Compactness criterion for \(\square\)]\label{cor:compact_box_top}
Let \(\mathcal F=\{[\X_j,\sfd_j,\mm_j]\}_{j\in J}\) be a given subset of \(\mathbb X_1\). Then \(\mathcal F\)
is \(\square\)-precompact if and only if for any \(\varepsilon>0\) there exist \(N\in\N\) and \(x^j_1,\ldots,x^j_N\in\X_j\)
for any \(j\in J\) such that \(\mm_j(\bigcup_{i=1}^N B_\varepsilon(x^j_i))>1-\varepsilon\)
and \({\rm diam}\{x^j_1,\ldots,x^j_N\}\leq N\).
\end{corollary}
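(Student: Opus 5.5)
The plan is to transfer the problem to \(\mathscr P_\sim(\mathbb U)\) and apply Theorem \ref{thm:precpt_W_H}. By Theorem \ref{thm:box_equiv_mG}, the identity map \((\mathbb X_1,\square)\to(\mathbb X_1,\sfd_{\rm mG})\) is uniformly continuous in both directions. Indeed, \(\sfd_{\rm mG}\leq\frac32\square\) and \(\square\leq\sqrt{2\,\sfd_{\rm mG}}\), so the two metrics have the same Cauchy sequences and the same totally bounded sets. A set is totally bounded in one metric if and only if it is in the other: an \(\varepsilon\)-net for \(\square\) is a \(\frac32\varepsilon\)-net for \(\sfd_{\rm mG}\), and a \(\delta\)-net for \(\sfd_{\rm mG}\) is a \(\sqrt{2\delta}\)-net for \(\square\).

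Precompactness for \(\square\) also needs completeness. Since \(\sfd_{\rm mG}\) is complete (Theorem \ref{thm:equiv_W_H}) and the identity is uniformly bi-continuous, \(\square\) is complete as well. Hence \(\square\)-precompact and \(\square\)-totally bounded coincide, and likewise for \(\sfd_{\rm mG}\). If one prefers to read "precompact" as "relatively compact", this holds in both metrics, and since the two metrics induce the same topology the statements coincide anyway. Therefore \(\mathcal F\) is \(\square\)-precompact if and only if \(\Phi(\mathcal F)\) is \(W_H\)-precompact.

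Next I apply the equivalence i)\(\Leftrightarrow\)iii) of Theorem \ref{thm:precpt_W_H} to \(\mathcal K\coloneqq\Phi(\mathcal F)\). It then suffices to translate condition iii) into the intrinsic condition of the corollary. For each \(j\), fix an isometry \(f_j\colon\X_j\to\mathbb U\), so that \((f_j)_\#\mm_j\in\Phi([\X_j,\sfd_j,\mm_j])\).

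Assume iii) holds, and apply it to \(\mu=(f_j)_\#\mm_j\). This gives points \(x_i^\mu\in{\rm spt}(\mu)\subseteq f_j(\X_j)\). Since \(f_j(\X_j)\) is closed, I can set \(x_i^j\coloneqq f_j^{-1}(x_i^\mu)\). Then \(f_j^{-1}(B_\varepsilon(x_i^\mu))=B_\varepsilon(x^j_i)\), and the diameter is preserved, which yields the criterion.

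Conversely, assume the criterion with points \(x^j_i\in\X_j\). Any \(\mu\) with \([\mu]\in\mathcal K\) is equivalent to \((f_j)_\#\mm_j\) for some \(j\), via an isometry between supports. The only subtlety is that iii) demands points in \({\rm spt}(\mu)\), whereas the \(x_i^j\) need not lie in \({\rm spt}(\mm_j)\). I handle this by working at scale \(\varepsilon/2\) and discarding indices whose balls have null \(\mm_j\)-measure. For each remaining index, choose \(y_i\in{\rm spt}(\mm_j)\cap B_{\varepsilon/2}(x_i^j)\), which is nonempty since the ball has positive mass. Then \(B_{\varepsilon/2}(x^j_i)\subseteq B_\varepsilon(y_i)\), and the diameter of the new points is at most \(N+\varepsilon\). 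Replacing \(N\) by a larger integer \(N'\geq N+1\) and padding with repeated points fixes the diameter bound. Finally, transport the points through the isometry \({\rm spt}(\mm_j)\cong{\rm spt}((f_j)_\#\mm_j)\to{\rm spt}(\mu)\).

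The main obstacle is only bookkeeping: ensuring that the points lie in the support, keeping the same \(N\) uniformly in \(j\), and keeping the strict inequality \(>1-\varepsilon\). All of this is handled by the \(\varepsilon/2\) adjustment and by enlarging \(N\).
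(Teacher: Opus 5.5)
Your proposal is correct and follows the paper's route. The paper obtains the corollary by transferring precompactness through the H\"{o}lder equivalence of Theorem \ref{thm:box_equiv_mG} and then invoking the equivalence i)$\Leftrightarrow$iii) of Theorem \ref{thm:precpt_W_H}; the intrinsic-versus-extrinsic translation, which the paper leaves implicit, is correctly handled by your $\varepsilon/2$ adjustment that moves the points into the support (where $N'\geq N+1$ suffices once you take $\varepsilon\leq 1$, which is harmless since the condition is monotone in $\varepsilon$).
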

\section{Relation with the pmG-convergence}
By a \emph{pointed metric measure space} \((\X,\sfd,\mm,p)\) we mean a metric measure space
\((\X,\sfd,\mm)\) together with a distinguished point \(p\in{\rm spt}(\mm)\), and when \(\mm\in\mathscr P(\X)\)
we say that \((\X,\sfd,\mm,p)\) is \emph{normalised}. Two pointed metric measure spaces
\((\X,\sfd_\X,\mm_\X,p_\X)\) and \((\Y,\sfd_\Y,\mm_\Y,p_\Y)\) are \emph{isomorphic as pointed metric measure spaces}
if there exists an isomorphism of metric measure spaces \(\phi\) from \((\X,\sfd_\X,\mm_\X)\) to
\((\Y,\sfd_\Y,\mm_\Y)\) satisfying \(\phi(p_\X)=p_\Y\). Letting \([\X,\sfd,\mm,p]\) be the isomorphism class
of a pointed metric measure space \((\X,\sfd,\mm,p)\), we define
\[\begin{split}
\mathbb X_{pt}&\coloneqq\{[\X,\sfd,\mm,p]\;|\;(\X,\sfd,\mm,p)\text{ is a pointed metric measure space}\},\\
\mathbb X_{1,pt}&\coloneqq\{[\X,\sfd,\mm,p]\in\mathbb X_{pt}\;|\;[\X,\sfd,\mm]\in\mathbb X_1\}.
\end{split}\]

Let us recall the notion of pointed-measured-Gromov convergence, in its extrinsic formulation, introduced by
Gigli, Mondino and Savar\'{e} in \cite{GigliMondinoSavare15}. Let \(([\X_n,\sfd_n,\mm_n,p_n])_{n\in\bar\N}\subseteq\mathbb X_{pt}\)
be given. We say that \([\X_n,\sfd_n,\mm_n,p_n]\) \emph{pmG-converges} to \([\X_\infty,\sfd_\infty,\mm_\infty,p_\infty]\)
if there exist a complete separable metric space \((\Y,\sfd_\Y)\) and isometries \(\iota_n\colon\X_n\to\Y\) such that
\(\iota_n(p_n)\to\iota_\infty(p_\infty)\) and
\begin{equation}\label{eq:def_pmG}
\int f\,\d(\iota_n)_\#\mm_n\to\int f\,\d(\iota_\infty)_\#\mm_\infty\quad
\text{ for every }f\in C_b(\Y)\text{ with bounded support.}
\end{equation}
We recall that if \(\mm_n\in\mathscr P(\X_n)\) for every \(n\in\bar\N\), then \eqref{eq:def_pmG} holds if and only
if \((\iota_n)_\#\mm_n\rightharpoonup(\iota_\infty)_\#\mm_\infty\).
Two equivalent notions are the \emph{wpmGH-convergence}, introduced by the second and third named
authors in \cite{PS2021}, and the convergence induced by the distance \(d_*\), considered by Bate in \cite{Bate22}.
\begin{proposition}\label{prop:embd_Xpt_to_X}
The map \({\sf j}\colon(\mathbb X_{1,pt},\tau_{\rm pmG})\to(\mathbb X_1,\square)\), which we define as
\[
{\sf j}(\mathcal X)\coloneqq[\X,\sfd,(\mm+2\delta_p)/3]\quad\text{ for every }\mathcal X=[\X,\sfd,\mm,p]\in\mathbb X_{1,pt},
\]
is a topological embedding.
\end{proposition}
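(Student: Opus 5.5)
Since \(\square\) and \(\sfd_{\rm mG}\) induce the same topology by Theorem \ref{thm:box_equiv_mG}, and \(\sfd_{\rm mG}\)-convergence is exactly mG-convergence by Theorem \ref{thm:equiv_mG-conv}, I will work throughout with mG-convergence and realise everything in \(\mathbb U\). Both spaces are first countable: \(\mathbb X_1\) is metric, and on \(\mathbb X_{1,pt}\) the pmG-topology is metrisable, e.g.\ by the pointed analogue of \(W_H\) or via \cite{GigliMondinoSavare15}. It therefore suffices to show three things: \({\sf j}\) is well defined and injective; \(\mathcal X_n\to\mathcal X_\infty\) in pmG implies \({\sf j}(\mathcal X_n)\to{\sf j}(\mathcal X_\infty)\) in mG; and the converse implication holds.

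\textbf{Well-posedness and injectivity.} The point \(p\) lies in \({\rm spt}(\mm)\), so \((\mm+2\delta_p)/3\) has the same support as \(\mm\). Hence an isomorphism of pointed spaces pushes \((\mm+2\delta_p)/3\) forward to the corresponding measure, and \({\sf j}\) is well defined. For injectivity, let \(\phi\colon{\rm spt}(\mm_\X)\to{\rm spt}(\mm_\Y)\) be an isometric bijection with \(\phi_\#(\mm_\X+2\delta_{p_\X})=\mm_\Y+2\delta_{p_\Y}\). Every atom of \(\mm+2\delta_p\) other than \(p\) has mass at most \(1\), while \(p\) carries mass at least \(2\). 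So \(p\) is the unique point of mass \(\geq 2\), which forces \(\phi(p_\X)=p_\Y\). Subtracting \(2\delta\) on both sides then gives \(\phi_\#\mm_\X=\mm_\Y\).

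\textbf{Continuity of \({\sf j}\).} Take embeddings \(\iota_n\) into a common space with \(\iota_n(p_n)\to\iota_\infty(p_\infty)\) and \((\iota_n)_\#\mm_n\rightharpoonup(\iota_\infty)_\#\mm_\infty\). Then \(\delta_{\iota_n(p_n)}\rightharpoonup\delta_{\iota_\infty(p_\infty)}\), and weak convergence is preserved under convex combinations. Hence \((\iota_n)_\#(\mm_n+2\delta_{p_n})/3\) converges weakly to the corresponding limit, which is exactly mG-convergence of \({\sf j}(\mathcal X_n)\).

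\textbf{Continuity of the inverse (main obstacle).} Suppose \({\sf j}(\mathcal X_n)\to{\sf j}(\mathcal X_\infty)\) in mG. By Theorem \ref{thm:equiv_mG-conv} and its proof, we can choose embeddings \(\iota_n\colon\X_n\to\mathbb U\) such that \(\nu_n\coloneqq(\iota_n)_\#(\mm_n+2\delta_{p_n})/3\rightharpoonup\nu_\infty\). Write \(z_n\coloneqq\iota_n(p_n)\). The key step is to show \(z_n\to z_\infty\). For \(r\in(0,1)\), the portmanteau theorem gives \(\liminf_n\nu_n(B_r(z_\infty))\geq\nu_\infty(B_r(z_\infty))\geq 2/3\). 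If \(\sfd_{\mathbb U}(z_n,z_\infty)\geq 2r\) along a subsequence, then \(z_n\notin B_r(z_\infty)\), so \(\nu_n(B_r(z_\infty))\leq\frac13\mm_n(\X_n)=1/3\), a contradiction. Thus \(z_n\to z_\infty\), so \(\delta_{z_n}\rightharpoonup\delta_{z_\infty}\). Finally, \((\iota_n)_\#\mm_n=3\nu_n-2\delta_{z_n}\) converges in duality with \(C_b(\mathbb U)\) to \(3\nu_\infty-2\delta_{z_\infty}=(\iota_\infty)_\#\mm_\infty\). This yields pmG-convergence with ambient space \(\Y=\mathbb U\), and completes the proof that \({\sf j}\) is a topological embedding.
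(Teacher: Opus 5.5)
Your proof is correct and follows essentially the same route as the paper: continuity of \({\sf j}\) comes from \(\delta_{\iota_n(p_n)}\rightharpoonup\delta_{\iota_\infty(p_\infty)}\). Continuity of the inverse comes from the portmanteau inequality on a small ball around \(\iota_\infty(p_\infty)\) (mass \(\geq 2/3\) in the limit, but at most \(1/3\) whenever the base point lies outside the ball), which forces \(\iota_n(p_n)\to\iota_\infty(p_\infty)\); you then write \((\iota_n)_\#\mm_n=3\nu_n-2\delta_{\iota_n(p_n)}\). Your extra checks of well-posedness, injectivity (the base point as the unique atom of mass \(\geq 2\)) and first countability are details the paper leaves implicit; the only cosmetic point is that the isometries into \(\mathbb U\) from Theorem \ref{thm:equiv_mG-conv} live on \({\rm spt}(\mm_n)\) rather than on all of \(\X_n\), which is harmless since you may use that representative (or simply work in the ambient space \(\Y\) of Definition \ref{def:mG}, as the paper does).
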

\begin{proof}
The implication \({\rm i)}\Longrightarrow{\rm ii)}\) readily follows from the observation that if
\(\iota_n(p_n)\to\iota_\infty(p_\infty)\), then \(\delta_{\iota_n(p_n)}\rightharpoonup\delta_{\iota_\infty(p_\infty)}\).
To prove the implication \({\rm ii)}\Longrightarrow{\rm i)}\), assume \([\X_n,\sfd_n,\mu_n]\to[\X_\infty,\sfd_\infty,\mu_\infty]\) in the
mG sense, where we set \(\mu_n\coloneqq(\mm_n+2\delta_{p_n})/3\in\mathscr P(\X_n)\) for all \(n\in\bar\N\). Given \(\varepsilon>0\), we have
\[\begin{split}
\frac{2}{3}&=\frac{2}{3}\delta_{\iota_\infty(p_\infty)}(B(\iota_\infty(p_\infty),\varepsilon))\leq
(\iota_\infty)_\#\mu_\infty(B(\iota_\infty(p_\infty),\varepsilon))\leq\liminf_{n\to\infty}(\iota_n)_\#\mu_n(B(\iota_\infty(p_\infty),\varepsilon))\\
&\leq\frac{1}{3}+\frac{2}{3}\liminf_{n\to\infty}\delta_{\iota_n(p_n)}(B(\iota_\infty(p_\infty),\varepsilon)),
\end{split}\]
which gives \(\{\iota_n(p_n):n\geq n_\varepsilon\}\subseteq B(\iota_\infty(p_\infty),\varepsilon)\) for some \(n_\varepsilon\in\N\).
Then \(\iota_n(p_n)\to\iota_\infty(p_\infty)\), thus also
\[
(\iota_n)_\#\mm_n=3(\iota_n)_\#\mu_n-2\delta_{\iota_n(p_n)}\rightharpoonup 3(\iota_\infty)_\#\mu_\infty-2\delta_{\iota_\infty(p_\infty)}
=(\iota_\infty)_\#\mm_\infty.
\]
All in all, we have proven that \([\X_n,\sfd_n,\mm_n,p_n]\to[\X_\infty,\sfd_\infty,\mm_\infty,p_\infty]\) in the pmG sense.
\end{proof}

Given any \(k\in\mathbb Z\), we define the Lipschitz cut-off function \(\eta_k\colon[0,+\infty)\to[0,1]\) as
\[
\eta_k(t)\coloneqq(2-2^{-k}t)^+\wedge 1\quad\text{ for every }t\in[0,+\infty).
\]
To any \(\mathcal X=[\X,\sfd,\mm,p]\in\mathbb X_{pt}\) we associate the spaces
\((\mathcal X^k)_{k\in\mathbb Z}\subseteq\mathbb X_{1,pt}\) given by
\[
\mathcal X^k\coloneqq[\X,\sfd,\mm^{p,k},p],\quad\text{ where we set }
\mm^{p,k}\coloneqq\bigg(\int\eta_k(\sfd(p,\cdot))\,\d\mm\bigg)^{-1}\eta_k(\sfd(p,\cdot))\mm\in\mathscr P(\X).
\]
In the next result, \(\mathbb X_1\) is equipped with the box topology and
\(\mathbb X_1^{\mathbb Z}\times\R^{\mathbb Z}\) with the product topology.
\begin{theorem}\label{thm:embed_for_pmG}
The map \(\Psi\colon(\mathbb X_{pt},\tau_{\rm pmG})\to\mathbb X_1^\mathbb Z\times\R^{\mathbb Z}\), which we define as
\[
\Psi(\mathcal X)\coloneqq\bigg(({\sf j}(\mathcal X^k))_{k\in\mathbb Z},
\bigg(\log\int\eta_k(\sfd(p,\cdot))\,\d\mm\bigg)_{k\in\mathbb Z}\bigg)
\quad\text{ for every }\mathcal X=[\X,\sfd,\mm,p]\in\mathbb X_{pt},
\]
is a topological embedding.
\end{theorem}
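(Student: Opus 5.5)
We have to show that \(\Psi\) is injective, continuous, and that its inverse is continuous on \(\Psi(\mathbb X_{pt})\). Since the box topology on \(\mathbb X_1\) is metrisable (Theorem \ref{thm:box_equiv_mG}) and \(\mathbb Z\) is countable, the product \(\mathbb X_1^{\mathbb Z}\times\R^{\mathbb Z}\) is metrisable. Continuity can therefore be checked on sequences. For pmG-convergence we use its extrinsic, sequential definition. We write \(Z_k(\mathcal X)\coloneqq\int\eta_k(\sfd(p,\cdot))\,\d\mm\). This is finite and positive because \(\eta_k(\sfd(p,\cdot))\) is bounded with bounded support and \(p\in{\rm spt}(\mm)\).

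\textbf{Injectivity.} Suppose \(\Psi(\mathcal X)=\Psi(\mathcal Y)\). Then for each \(k\) we have \(\mathcal X^k=\mathcal Y^k\) in \(\mathbb X_{1,pt}\), by injectivity of \(\sf j\) (Proposition \ref{prop:embd_Xpt_to_X}), and \(Z_k(\mathcal X)=Z_k(\mathcal Y)\). Since \(\eta_k=1\) on \([0,2^k]\), the restriction \(Z_k(\mathcal X)\,\mm^{p,k}|_{B_{2^k}(p)}=\mm|_{B_{2^k}(p)}\) is determined by the pointed isomorphism class together with \(Z_k\).

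To glue across scales, one option is to take pointed isomorphisms \(\phi_k\) between \({\rm spt}(\mm_\X^{p,k})\) and \({\rm spt}(\mm_\Y^{q,k})\), restrict them to balls, and pass to a limit. The cleaner option is to embed both spaces in \(\mathbb U\). We get pointed isomorphisms for every \(k\), and hence measures \(\mu,\nu\) on \(\mathbb U\) with the same base point and \(\mu|_{B_{2^k}}\sim\nu|_{B_{2^k}}\) via maps fixing the point.

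An alternative that sidesteps gluing altogether is to deduce injectivity from the inverse-continuity argument below. Take the constant sequence \(\mathcal X_n=\mathcal X\) and limit \(\mathcal Y\). Inverse continuity gives \(\mathcal X\to\mathcal Y\) in pmG, and pmG limits are unique up to isomorphism by \cite{GigliMondinoSavare15}. I would use this route.

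\textbf{Continuity of \(\Psi\).} Let \(\mathcal X_n\to\mathcal X_\infty\) in pmG, realised in \(\Y\) with \(\iota_n(p_n)\to\iota_\infty(p_\infty)\) and with \eqref{eq:def_pmG}. Fix \(k\). The functions \(f_n\coloneqq\eta_k(\sfd_\Y(\iota_n(p_n),\cdot))\) converge uniformly to \(f_\infty\), all have supports inside a common bounded set, and are bounded by \(1\). Hence \(\int f_n\,\d(\iota_n)_\#\mm_n\to\int f_\infty\,\d(\iota_\infty)_\#\mm_\infty\); here we use that \((\iota_n)_\#\mm_n\) has uniformly bounded mass on bounded sets, which follows from \eqref{eq:def_pmG} applied to a cut-off. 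So \(Z_k(\mathcal X_n)\to Z_k(\mathcal X_\infty)>0\), and the \(\log\) coordinates converge. The same argument applied to \(g f_n\) for \(g\in C_b(\Y)\) shows that \((\iota_n)_\#\mm_n^{p_n,k}\rightharpoonup(\iota_\infty)_\#\mm_\infty^{p_\infty,k}\). So \(\mathcal X_n^k\to\mathcal X_\infty^k\) in pmG within \(\mathbb X_{1,pt}\). By Proposition \ref{prop:embd_Xpt_to_X} and Theorem \ref{thm:box_equiv_mG}, \({\sf j}(\mathcal X_n^k)\to{\sf j}(\mathcal X_\infty^k)\) in \(\square\).

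\textbf{Continuity of the inverse (main obstacle).} Assume \(\Psi(\mathcal X_n)\to\Psi(\mathcal X_\infty)\). For each \(k\), Proposition \ref{prop:embd_Xpt_to_X} gives \(\mathcal X^k_n\to\mathcal X^k_\infty\) in pmG, but the ambient embeddings a priori depend on \(k\). The difficulty is to find a single embedding that works for all scales.

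The plan is to use \(\mathbb U\) and a diagonal argument. By Theorem \ref{thm:equiv_mG-conv} and \eqref{eq:equiv_W_H}, we can fix representatives in \(\mathbb U\) of the limits \({\sf j}(\mathcal X^k_\infty)\). All of these can be taken as \(\mu^k_\infty\coloneqq(\iota_\infty)_\#(\mm_\infty^{p,k}+2\delta_p)/3\) for one fixed embedding \(\iota_\infty\) of \(\X_\infty\). Then for each \(k\) there are embeddings \(\iota_n^{(k)}\colon\X_n\to\mathbb U\) whose pushed-forward measures are \(W\)-close to \(\mu^k_\infty\).

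Convergence at a large scale \(K\) controls all smaller scales. Indeed, \(\mm^{p,k}\) is \(\eta_k(\sfd(p,\cdot))/\eta_K(\sfd(p,\cdot))\) times \(\mm^{p,K}\), up to the ratio of normalising constants \(Z_K/Z_k\), and that ratio converges by the \(\R^{\mathbb Z}\) coordinates. The density \(\eta_k(\sfd(p,\cdot))/\eta_K(\sfd(p,\cdot))\) is bounded and continuous for \(k\le K\), and the base points converge. So for each \(K\) the single embedding \(\iota_n^{(K)}\) gives weak convergence at every scale \(k\le K\).

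Now choose \(K(n)\to\infty\) slowly, so that with \(\iota_n\coloneqq\iota_n^{(K(n))}\) we get, for every fixed \(k\), \(\iota_n(p_n)\to\iota_\infty(p_\infty)\) and \((\iota_n)_\#\mm_n^{p_n,k}\rightharpoonup(\iota_\infty)_\#\mm_\infty^{p_\infty,k}\). Weak convergence can be quantified by \(W\) and the errors are summable, so this choice is possible. Multiplying by \(Z_k\) and using \(\eta_k\equiv1\) on \(B_{2^k}\), we obtain \(\int f\,\d(\iota_n)_\#\mm_n\to\int f\,\d(\iota_\infty)_\#\mm_\infty\) for every boundedly supported \(f\in C_b(\mathbb U)\). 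Such an \(f\) is supported in \(B_{2^k}(\iota_\infty(p_\infty))\) for some \(k\), and we can replace it by \(f/\eta_{k+1}\) times the density at scale \(k+1\). This is precisely pmG-convergence.
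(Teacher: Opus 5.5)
Your proof is correct, but it takes a different route from the paper's. The paper argues in a few lines. It quotes that $\tau_{\rm pmG}$ is metrised by the Gigli--Mondino--Savar\'e distance ${\rm p}\mathbb G_{\rm W}$, which is itself assembled scale by scale from the normalised cut-off spaces $\mathcal X^k$ and the constants $\log\int\eta_k(\sfd(p,\cdot))\,\d\mm$. Unpacking that definition, ${\rm p}\mathbb G_{\rm W}$-convergence is exactly coordinatewise convergence of $\Psi$, once ${\rm G}_W$ is identified with $\sfd_{\rm mG}$ (Remark \ref{rmk:GW-dmG}) and with the box topology (Proposition \ref{prop:embd_Xpt_to_X}, Theorem \ref{thm:box_equiv_mG}). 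Injectivity and both continuity directions then come out at once.

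The step you rightly single out as the main obstacle is finding one ambient embedding valid at all scales. The paper outsources it to the Gigli--Mondino--Savar\'e theorem that ${\rm p}\mathbb G_{\rm W}$ metrises extrinsic pmG-convergence. You use from that work only that $\tau_{\rm pmG}$ is metrisable and Hausdorff, and you do the gluing across scales yourself inside $\mathbb U$. You fix one embedding of the limit and note that convergence at scale $K$ under a single embedding controls every scale $k<K$, since there $\eta_k/\eta_K=\eta_k$ and the base points converge. You then diagonalise with $K(n)\to\infty$. This is longer but self-contained, and it uses the paper's description of $\mathscr P_\sim(\mathbb U)$ in an essential way. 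Deducing injectivity from the sequential inverse argument plus uniqueness of pmG-limits is also fine.

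Three details should be made explicit.

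\textbf{Global embeddings.} The isometries supplied by $\sim$ for ${\sf j}(\mathcal X_n^K)$ are defined only on ${\rm spt}(\mm_n^{p_n,K})$, a $K$-dependent subset of $\X_n$. pmG-convergence needs isometric embeddings of all of $\X_n$, and isometries from infinite subsets into $\mathbb U$ need not extend. This is easily fixed. By Lemma \ref{lem:Iso_acts_by_isom}, $\Phi({\sf j}(\mathcal X_n^K))$ is the closure of the ${\rm Iso}(\mathbb U)$-orbit of $(f_n)_\#\big((\mm_n^{p_n,K}+2\delta_{p_n})/3\big)$ for any fixed isometry $f_n\colon\X_n\to\mathbb U$. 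Since $W_H$ is the Hausdorff distance, you may take $\iota_n^{(K)}=\varphi\circ f_n$ for a suitable $\varphi\in{\rm Iso}(\mathbb U)$.

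\textbf{Sequential continuity.} Checking continuity of $\Psi$ on sequences requires first countability of the domain $(\mathbb X_{pt},\tau_{\rm pmG})$, which again comes from Gigli--Mondino--Savar\'e. Metrisability of the target does not suffice.

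\textbf{The diagonal step.} ``The errors are summable'' should be replaced by the standard diagonal choice. Pick $N_1<N_2<\cdots$ such that, for $n\geq N_K$, the base-point error and the $W$-errors at all scales $|k|\leq K$ for $\iota_n^{(K)}$ are below $1/K$. Then set $K(n)\coloneqq K$ for $N_K\leq n<N_{K+1}$.
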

\begin{proof}
We recall from \cite[Theorem 3.15]{GigliMondinoSavare15} that the topology \(\tau_{\rm pmG}\) is induced by
the distance \({\rm p}\mathbb G_{\rm W}\) defined in \cite[Definition 3.13]{GigliMondinoSavare15}.
Given any \((\mathcal X_n)_{n\in\bar\N}\subseteq\mathbb X_{pt}\), by unpacking the definition of \({\rm p}\mathbb G_{\rm W}\)
one can easily check that \(\lim_n{\rm p}\mathbb G_{\rm W}(\mathcal X_n,\mathcal X_\infty)=0\) if and only if
\(\lim_n{\rm G}_W({\sf j}(\mathcal X_n^k),{\sf j}(\mathcal X_\infty^k))=0\) and
\(\lim_n\log\int\eta_k(\sfd_n(p_n,\cdot))\,\d\mm_n=\log\int\eta_k(\sfd_\infty(p_\infty,\cdot))\,\d\mm_\infty\) for every \(k\in\mathbb Z\).
In view of Remark \ref{rmk:GW-dmG} and Proposition \ref{prop:embd_Xpt_to_X}, we conclude that \(\Psi\) is a topological embedding.
\end{proof}
\begin{remark}\label{rmk:about_p-Wasserstein}{\rm
Throughout the paper, we have considered only truncated \(1\)-Wasserstein distances, solely for convenience of presentation.
However, all the results extend straightforwardly to truncated \({\rm p}\)-Wasserstein distances for any fixed exponent
\({\rm p}\in[1,\infty)\). More precisely, one may consider
\[
W_{\rm p}(\mu,\nu)\coloneqq\inf_{\pi\in{\rm Adm}(\mu;\nu)}\bigg(\int\sfd^{\rm p}\wedge 1\,\d\pi\bigg)^{1/{\rm p}}
\quad\text{ for every }\mu,\nu\in\mathscr P(\X);
\]
note that \(W_{\rm 1}=W\). All the qualitative results of this paper remain valid with this choice. The quantitative
results can easily be adapted as well: denoting by \(\sfd_{{\rm p},{\rm mG}}\), \({\rm D}_{\rm p}\) and \({\rm G}_{{\rm p},W}\)
the natural \({\rm p}\)-analogues of \(\sfd_{\rm mG}\), \({\rm D}\) and \({\rm G}_W\), respectively, one obtains that
\(\sfd_{{\rm p},{\rm mG}}={\rm D}_{\rm p}={\rm G}_{{\rm p},W}\), while the estimate for the box distance given in Theorem
\ref{thm:box_equiv_mG} becomes \(\frac{1}{2}\square^{({\rm p}+1)/{\rm p}}\leq\sfd_{\rm p,mG}\leq(\frac{3}{2}\square)^{1/{\rm p}}\).
\fr}\end{remark}
\begin{remark}{\rm
In \cite{Sturm2023}, Sturm investigated the geometry of the \emph{space of metric measure spaces}. We expect that the techniques
developed in the present work may also prove useful in this context. For instance, by suitably adapting our arguments, one can
plausibly show that the structures considered by Sturm are metric quotients of spaces of probability measures on 
\(\mathbb U\). Such identifications may, in turn, provide a useful tool for further understanding their intrinsic geometry.
\fr}\end{remark}
\def\cprime{$'$} \def\cprime{$'$}

\end{document}